\newtheorem{thm}[subsubsection]{Theorem}
\newtheorem{lem}[subsubsection]{Lemma}
\newtheorem{prop}[subsubsection]{Proposition}
\newtheorem{cor}[subsubsection]{Corollary}
\theoremstyle{remark}
\newtheorem{rem}[subsubsection]{Remark}
\theoremstyle{definition}
\nop\MC{MC}
\nc\pos{\mathrm{pos}}
\nc\enh{\mathrm{enh}}
\nc\adj{\mathrm{adj}}
\nc\sub{\mathrm{sub}}
\nc\oo[1]{\overset\circ{#1}}
\nc\VinBun{\mathrm{VinBun}}
\numberwithin{equation}{section}
\title{On the reductive monoid associated to a parabolic subgroup}
\author{Jonathan Wang}
\begin{document} 

\begin{abstract}
Let $G$ be a connected reductive group over a perfect field $k$. We study 
a certain normal reductive monoid $\wbar M$ associated to a parabolic
$k$-subgroup $P$ of $G$. The group of units of $\wbar M$ is the Levi factor $M$ of $P$. 
We show that $\wbar M$ is a retract of the affine closure of the quasi-affine variety 
$G/U(P)$. Fixing a parabolic $P^-$ opposite to $P$, we prove that
the affine closure of $G/U(P)$ is a retract of the affine closure of the
boundary degeneration $(G \xt G)/(P \xt_M P^-)$. 
Using idempotents, we relate $\wbar M$ to the Vinberg semigroup of $G$. 
The monoid $\wbar M$ is used implicitly in the study of stratifications of Drinfeld's 
compactifications of the moduli stacks $\Bun_P$ and $\Bun_G$.
\end{abstract}

\maketitle

\section{Introduction}

\subsection{Motivation}

\subsubsection{}
Let $G$ be a connected reductive group over a perfect field $k$.
Let $U(P)$ denote the unipotent radical of a parabolic subgroup $P$ of $G$.
Grosshans proved in \cite{Grosshans} that the homogeneous space $G/U(P)$
is a quasi-affine variety and the algebra of regular functions $k[G/U(P)]$ 
is finitely generated. 

In \cite{AT}, Arzhantsev and Timashev consider affine embeddings of $G/U(P)$ 
and give a detailed description of the \emph{canonical embedding} $G/U(P) \into \spec k[G/U(P)]$ 
under the assumption that the characteristic of $k$ is $0$.
They establish a bijection between these affine embeddings and certain normal algebraic
monoids with group of units equal to the Levi factor $M = P/U(P)$.
In particular, the canonical embedding corresponds to the monoid $\wbar M$ defined as 
the closure of $M$ in $\spec k[G/U(P)]$.
This construction, which we first learned from \cite{Baranovsky}, defines an affine algebraic 
monoid $\wbar M$ in any characteristic. It is not \emph{a priori} clear, however,
whether the monoid $\wbar M$ is normal in positive characteristic. 

One of the goals of this paper is to show that $\wbar M$ is a normal algebraic monoid
with group of units $M$ in any characteristic, and to describe the combinatorial 
data it corresponds to under
the classification of normal reductive monoids in \cite[Theorem 5.4]{Renner}.

\subsubsection{}
Let $\wbar{G/U(P)}$ denote the spectrum of $k[G/U(P)]$. Then $\wbar{G/U(P)}$
is an affine variety of finite type, and it plays a prominent role in the definition
of Drinfeld's compactification $\wtilde \Bun_P$ of the moduli stack 
of $P$-bundles over a smooth complete curve.
Drinfeld's compactification is used to define the geometric Eisenstein series functors
in \cite{BG}. As Baranovsky observes in \cite[\S 6]{Baranovsky}, 
the monoid $\wbar M$ is used implicitly when studying the stratification
of $\wtilde \Bun_P$. More specifically, the closed subscheme $\Gr^+_M \subset \Gr_M$ of the 
affine Grassmannian (cf.~\cite[\S 6.2]{BG}, \cite[\S 1.6]{BFGM}) is 
just $(\wbar M(O) \cap M(K))/M(O)$ inside $M(K)/M(O)$, where $O$ is a complete discrete valuation
ring with field of fractions $K$.
The relative version of $\Gr^+_M$ becomes $\eH^+_M$, the positive part of the Hecke stack
(cf.~\cite[\S 1.8]{BFGM}).

The stack $\eH^+_M$ is therefore the global model for the formal arc space of the embedding
$M \into \wbar M$, as considered in \cite[\S 2]{BNY}. We hope that studying the properties
of $\wbar M$ will provide a better understanding of $\eH^+_M$. 

\subsubsection{}
If $P^-$ is a parabolic subgroup opposite to $P$, then $G/U(P)$ is closely related
to the more symmetrically defined variety $X_P = (G/U(P) \xt G/U(P^-))/(P \cap P^-)$, which is also
quasi-affine. This variety $X_P$ is 
called a \emph{boundary degeneration} of $G$ in \cite{SV} (when $P$ is not a Borel subgroup, $X_P$ is an \emph{intermediate} degeneration), and it
is a central object in the geometric proof of Bernstein's Second Adjointness Theorem in 
the theory of $\fp$-adic groups given in \cite{BK}. We note that this proof and 
the space $X_P$ are closely related to the study of geometric constant term (and Eisenstein series) 
functors in \cite{DG:CT}. 

The boundary degeneration $X_P$ and its affine closure $\wbar X_P:=\spec k[X_P]$ may be recovered from 
the \emph{Vinberg semigroup} corresponding to $G$. 
The Vinberg semigroup $\wbar{G_\enh}$ is used to define the Drinfeld-Lafforgue 
compactification $\wbar\Bun_G$ (resp.~the Drinfeld-Lafforgue-Vinberg compactification $\VinBun_G$)
of the moduli stack $\Bun_G$ in \cite{G:Verdier, Schieder:II}. 
As one might expect, 
the positive part $\eH^+_M$ of the Hecke stack appears in the stratification
of $\wbar\Bun_G$ (resp.~$\VinBun_G$), where $P$ ranges over all conjugacy classes of parabolic
subgroups, assuming that $G$ is split.
In this article we attempt to explain the relations between $\wbar M,\, \wbar{G/U(P)},\, 
\wbar X_P$, and $\wbar{G_\enh}$ in hopes that it will elucidate the geometry underlying
the aforementioned stratifications. 

\subsubsection{}
In \cite{Sak}, Sakellaridis fixes a strictly convex cone in the $\bbQ$-vector space spanned
by the coweights of a split maximal torus $T$ in $G$ in order to ``expand power series'' on 
the boundary degeneration $X_P$, under the assumption that the characteristic of $k$ is $0$. 
This cone is precisely the dual of what we call the \emph{Renner cone} of $\wbar M$. 
Thus the combinatorial description of $\wbar M$ provides a first step 
towards generalizing the results of \cite{Sak} to arbitrary characteristic. 

The description of $\wbar M$ is also of interest in the study of those local unramified
automorphic $L$-functions associated to certain ``basic functions'' on $\wbar M$ 
in the spirit of \cite{BNY}. Such functions are considered in \cite{Wang} in relation 
to the asymptotics map\footnote{The asymptotics map, defined in \cite{Sak, SV}, coincides
with the dual of the Bernstein map defined in \cite{BK}.} and inversion of intertwining
operators. The study of $\wbar M$, and more generally of the intermediate boundary degenerations 
$X_P$, is needed in \cite{Wang} to generalize the results of \cite{DW}, which
treats the case when $G = \SL(2)$.

\subsection{Contents}
In \S\ref{sect:recall}, we recall the classification of normal reductive monoids proved by 
L.~Renner. Given a reductive group and certain combinatorial data (what we call a \emph{Renner cone}),
we construct the associated normal algebraic monoid. 

\medskip

In \S\ref{sect:monoid}, we define the normal reductive monoid $\wbar M$ associated
to a parabolic subgroup $P$ of $G$. The group of units of $\wbar M$ is the Levi factor $M$ of
$P$. We first give a combinatorial definition of $\wbar M$ following Renner's classification.
We then show in \S\ref{sect:GUP} that this monoid may be realized as a retract 
of $\wbar{G/U(P)}$, the spectrum of regular functions on the quasi-affine variety $G/U(P)$.
Lastly in \S\ref{sect:Rep(M+)} we describe $\wbar M$ using the Tannakian formalism. This
Tannakian description shows how $\wbar M$ is used implicitly in \cite{BG}, \cite{BFGM}.

\medskip

In \S\ref{sect:XPVin}, we first recall the definition of the boundary degeneration 
$X_P$ associated to a pair of opposite parabolics. We show that 
$\wbar{G/U(P)}$ is a retract (and hence a closed subscheme) of $\wbar X_P:=\spec k[X_P]$. 
Using the relation between the boundary degeneration 
and the Vinberg semigroup of $G$ (i.e., the enveloping semigroup of $G$), we give
another definition of the reductive monoid $\wbar M$ using the 
existence of a certain idempotent in the Vinberg semigroup.

\subsection{Conventions}

Let $k$ be a perfect field of arbitrary characteristic. All schemes considered will be $k$-schemes. 
For a scheme $S$, let $k[S]$ denote the ring of regular functions $\Gamma(S,\eO_S)$.

Fix an algebraic closure $\bar k$ of $k$, and let $\Gal(\bar k/k)$ denote its Galois group.
For a $k$-scheme $S$, let $S_{\bar k}$ denote the base change $S \xt_{\spec k} \spec \bar k$,
and let $\bar k[S] := \Gamma(S_{\bar k}, \eO_{S_{\bar k}})$.

\subsubsection{The group $G$}
Let $G$ be a connected reductive group over $k$.  
Let $T$ denote its \emph{abstract} Cartan and $W$ the corresponding Weyl group. 
We will denote by $\check \Lambda$ (resp.~$\Lambda$) the weight (resp.~coweight) lattice of $T_{\bar k}$,
which is a $\Gal(\bar k/k)$-module.

The semigroup of dominant coweights (resp., weights) will be denoted
by $\Lambda^+_G$ (resp., by $\check \Lambda_G^+$).
The set of vertices of the Dynkin diagram of $G$ will be denoted by $\Gamma_G$; for
each $i \in \Gamma_G$ there corresponds a simple coroot $\alpha_i$
and a simple root $\check \alpha_i$. 
We denote the non-negative integral span of the set of positive coroots (resp.~roots) by $\Lambda^\pos_G$ 
(resp.~$\check\Lambda^\pos_G$). 
For $\lambda,\mu \in \Lambda$ we will write that $\lambda \ge \mu$
if $\lambda - \mu \in \Lambda^\pos_G$, and similarly for $\check\Lambda_G^\pos$.
Let $w_0$ denote the longest element in the Weyl group of $G$.

Let $P$ be a parabolic subgroup of $G$. Let $U(P)$ be its unipotent radical 
and $M:=P/U(P)$ the Levi factor. 
We use $P$ to identify the abstract Cartan of $M$ with $T$ and let $W_M \subset W$ denote the
corresponding Weyl group. 
There is a subdiagram $\Gamma_M \subset \Gamma_G$. 
We will denote by $\Lambda^\pos_M \subset\Lambda_G^\pos$, $\Lambda^+_M \supset \Lambda^+_G$, $\ge_M$, $w_0^M \in W_M$, etc. the corresponding objects for $M$.

\subsubsection{}
Let $\Rep(G)$ denote the abelian category of finite-dimensional $G$-modules. 
This category admits a forgetful functor to the abelian category of $k$-vector spaces.
We define the functor 
\[ \ind^G_P : \Rep(P) \to \Rep(G) \] 
as in \cite[\S I.3.3]{Jantzen}. 
For a $P$-module $\wbar V$, the induced module $\ind^G_P(\wbar V) = (k[G] \ot_k \wbar V)^P$
is finite-dimensional by properness of $G/P$. 
The functor $\ind^G_P$ is right adjoint to the restriction functor 
(cf.~\cite[Proposition I.3.4]{Jantzen}). 
We also denote by $\ind^G_P$ the corresponding functor 
$\Rep(M) \to \Rep(G)$, 
where an $M$-module is considered as a $P$-module with trivial $U(P)$-action.

To a dominant weight $\check\lambda\in \check\Lambda_G^+$ one attaches the Weyl
$G_{\bar k}$-module $\Delta(\check\lambda)$, the dual Weyl module $\nabla(\check\lambda)$,
and the irreducible $G_{\bar k}$-module $L(\check\lambda)$ of highest weight $\check\lambda$.

\subsection{Acknowledgments} 
This research is partially supported by the Department of Defense (DoD) through the National Defense Science and Engineering Graduate Fellowship (NDSEG) Program.
I am very thankful to my doctoral advisor Vladimir Drinfeld for his continual guidance
and support throughout this project. I also thank Simon Schieder for helpful discussions 
on the Vinberg semigroup.

\section{Recollections on normal reductive monoids} \label{sect:recall}
In this section we give a brief review of the classification of normal reductive monoids (i.e., normal, irreducible, affine algebraic monoids whose group of units is reductive),
which is proved in \cite[Theorem 5.4]{Renner} by L.~Renner. 
In \cite{Renner}, the base field is assumed to be algebraically closed, but the statements 
easily generalize to the case of a perfect base field by Galois descent. 

To keep notation consistent with the rest of the article, we consider a connected
reductive group $M$ over $k$. Let $T$ denote its \emph{abstract} Cartan and $W_M$
the corresponding Weyl group.

\subsection{Renner cones}
We denote by $\check\Lambda$ the weight lattice of $T_{\bar k}$ (i.e., the lattice of characters). 
Let $\check\Lambda^\bbQ := \check\Lambda \ot \bbQ$, which is a $\bbQ$-vector space
with a $\Gal(\bar k/k)$-action. 

A \emph{Renner cone} is a convex rational polyhedral cone in $\check\Lambda^\bbQ$ that
is stable under the actions of $W_M$ and $\Gal(\bar k/k)$.
As the name suggests, the theorem of L.~Renner shows that normal algebraic monoids
with group of invertible elements $M$ bijectively correspond to Renner cones generating 
$\check\Lambda^\bbQ$ as a vector space. The correspondence is as follows: 

Let $\wbar M$ be a reductive monoid with group of units $M$.  
Fix a Borel subgroup $B \subset M_{\bar k}$ and a 
Cartan subgroup (i.e., maximal torus) $T_{\sub,\bar k} \subset B$, 
both defined over $\bar k$.
This gives an identification of $T_{\sub,\bar k}$ with the abstract Cartan $T_{\bar k}$. 
Consider the cone $\check C \subset \check\Lambda^\bbQ$ corresponding by 
\cite{KKMS} to the closure of $T_{\sub,\bar k}$ in $\wbar M_{\bar k}$. 
The pairs $(T_{\sub,\bar k}, B)$ of a Cartan subgroup contained in a Borel subgroup are all 
conjugate by $M(\bar k)$.
Since $M$ acts on $\wbar M$ by conjugation, $\check C$ does not depend on the choice
of $(T_{\sub,\bar k}, B)$. 
The Weyl group of $M$ acts on $T_{\sub,\bar k}$ through the normalizer of $T_{\sub,\bar k}$
in $M$, so $\check C$ is preserved by the action of $W_M$ on $\check \Lambda^\bbQ$.
The action of $\Gal(\bar k/k)$ on $G_{\bar k}$ induces an action on the set of pairs 
$(T_{\sub,\bar k}, B)$. Since $\check C$ is canonically defined independently of 
the choice of $(T_{\sub,\bar k}, B)$, the Galois action preserves $\check C$.
Therefore $\check C$ is a Renner cone, and it is the Renner cone corresponding to $\wbar M$.

\subsubsection{} 
Let $\check C \subset \check\Lambda^\bbQ$ be a Renner cone. 
We will construct the corresponding normal reductive monoid $\wbar M$. 
Let us choose a Cartan subgroup (i.e., maximal torus) $T_\sub$ of $M$, defined over $k$.
The construction of $\wbar M$ will not depend on this choice.

\subsection{The monoid $\wbar{T_\sub}$} \label{sect:R}
The characters $\check\Lambda$ form a basis of $\bar k[T]$. Let $R'$
denote the subalgebra of $\bar k[T]$ spanned by the characters in $\check C \cap \check\Lambda$.
The choice of a Borel $B \subset M_{\bar k}$ containing $T_{\sub,\bar k}$ gives
an isomorphism $T_{\sub,\bar k} \cong T_{\bar k}$. All such Borel subgroups are conjugate
by the normalizer of $T_{\sub,\bar k}$ in $M_{\bar k}$. 
The subalgebra $R'$ is preserved by the action of the Weyl group on $T$, so it defines
a corresponding subalgebra $R \subset \bar k[T_\sub]$, which does not depend on the choice of 
a Borel subgroup. 

\subsubsection{} 
Since $\check C$ is Galois stable, so is the subalgebra $R$. 
Set $\wbar{T_\sub} := \spec(R^{\Gal(\bar k/k)})$. 

\begin{lem}\label{lem:toricvar}
	(i) $\wbar{T_\sub}$ is a normal algebraic variety containing $T_\sub$
as a dense open subvariety. 

	(ii) $\wbar{T_\sub}$ has a (unique) monoidal structure extending the
group structure on $T_\sub$. 
\end{lem}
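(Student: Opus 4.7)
The plan is to prove the lemma by first handling the case over $\bar k$, where $\spec R$ is a standard affine toric variety associated to the cone $\check C$, and then descending to $k$. The key observation enabling descent is that $R$ admits a $\bar k$-basis, namely the characters in $\check C \cap \check\Lambda$, which is permuted by $\Gal(\bar k/k)$ since $\check C$ is Galois-stable. Hence canonically $R = R^{\Gal(\bar k/k)} \ot_k \bar k$, so $\wbar{T_\sub} \xt_k \spec \bar k = \spec R$, and properties such as normality, irreducibility, the inclusion of $T_\sub$ as a dense open subscheme, and the existence of a monoidal structure all transfer along this faithfully flat base change.

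Over $\bar k$, the choice of a Borel $B \supset T_{\sub,\bar k}$ identifies $\bar k[T_{\sub,\bar k}]$ with the group algebra $\bar k[\check\Lambda]$ and $R$ with the monoid algebra $\bar k[\check C \cap \check\Lambda]$. By Gordan's lemma, $\check C \cap \check\Lambda$ is a finitely generated monoid, and since $\check C$ is a convex cone this monoid is saturated in $\check\Lambda$. The standard fact (cf.~\cite{KKMS}) that the monoid algebra of a finitely generated saturated submonoid of a lattice is integrally closed then gives normality of $\spec R$, while irreducibility is immediate from $R$ being a subring of the domain $\bar k[T_{\sub,\bar k}]$. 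Since $\check C$ generates $\check\Lambda^\bbQ$, it has nonempty interior, so for any integral interior point $\check\mu_0 \in \check C \cap \check\Lambda$ one verifies that every $\check\lambda \in \check\Lambda$ can be written as a difference of elements of $\check C \cap \check\Lambda$ after adding a sufficient multiple of $\check\mu_0$; consequently $R[\check\mu_0^{-1}] = \bar k[\check\Lambda]$, realizing $T_{\sub,\bar k}$ as a principal open subscheme of $\spec R$. This proves (i) over $\bar k$. For (ii), the closure of $\check C \cap \check\Lambda$ under addition implies that the group comultiplication $\check\lambda \mapsto \check\lambda \ot \check\lambda$ on $\bar k[T_{\sub,\bar k}]$ restricts to a coproduct $R \to R \ot_{\bar k} R$; this endows $\spec R$ with a commutative monoid structure extending the group structure on $T_{\sub,\bar k}$, and uniqueness is forced by the density of $T_{\sub,\bar k}$.

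Descending to $k$ is then essentially formal. The identification $R = R^{\Gal(\bar k/k)} \ot_k \bar k$ together with faithfully flat descent propagates normality, irreducibility, and the dense open inclusion of $T_\sub$ down to $\wbar{T_\sub}$. The comultiplication is manifestly Galois-equivariant, being defined uniformly by the rule $\check\lambda \mapsto \check\lambda \ot \check\lambda$ on the Galois-stable character basis, so it descends to a unique monoidal structure on $\wbar{T_\sub}$ extending the group law on $T_\sub$.

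The one nonformal ingredient, and hence the main obstacle, is the integral closedness of $\bar k[\check C \cap \check\Lambda]$: the combinatorial statement that a saturated finitely generated submonoid of a lattice has an integrally closed semigroup algebra. I would invoke this directly from \cite{KKMS} rather than reprove it, since it holds in any characteristic and is the cornerstone of toric-variety theory. Every remaining step is either Gordan's lemma, a direct calculation on characters, or Galois descent along a permutation basis.
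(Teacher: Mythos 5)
Your proof is correct and follows essentially the same route as the paper: reduce to $\bar k$ by Galois descent (using that $R$ has a Galois-stable basis of characters in $\check C \cap \check\Lambda$), obtain (i) from the toric-variety theory of \cite{KKMS} applied to the finitely generated saturated monoid $\check C \cap \check\Lambda$, and obtain (ii) by observing that the comultiplication sends each basis character to its own tensor square, with uniqueness forced by density of $T_\sub$. The only difference is that you spell out details the paper delegates to the citation (Gordan's lemma, the principal-open localization at an interior character realizing $T_{\sub,\bar k}$ as a dense open, and the explicit descent of the coproduct), which is harmless.
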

\begin{proof}
By Galois descent, it suffices to check the statements over $\bar k$, 
and we have $\bar k[\wbar{T_\sub}] = R$.
The submonoid $\check C \cap \check \Lambda$ is finitely generated and generates 
$\check \Lambda$ as a group. Moreover the submonoid is \emph{saturated} (i.e., it is
the intersection of a rational cone with the lattice). Statement (i) follows
from \cite[Ch.~1, Thm.~1]{KKMS}. 

To prove statement (ii), one must show that the coproduct map $k[T_\sub] \to k[T_\sub] \ot k[T_\sub]$
sends the subalgebra $R$ to $R \ot R$.
This is clear because $R \ot \bar k$ has a basis consisting of 
characters of $T_{\sub,\bar k}$.
\end{proof}

\subsection{The monoid $\wbar M$}
We will define a normal algebraic monoid $\wbar M$ with group of
units $M$ such that the closure of $T_\sub$ in $\wbar M$ equals $\wbar{T_\sub}$. 
The monoid $\wbar M$ will be the spectrum of a certain
subalgebra $A$ of the algebra of regular functions on $M$.

\subsubsection{The algebra $A$} 
Let $A$ denote the algebra of all $f\in k[M]$ such that
for any $m_1, m_2 \in M(\bar k)$ the function 
\[ t \mapsto f(m_1 t m_2) \] 
belongs to the algebra $R$ defined in \S\ref{sect:R}. Since 
all Cartan subgroups of $M_{\bar k}$ are $M(\bar k)$-conjugate,  $A$ does not
depend on the choice of the subgroup $T_\sub \subset M$.

\begin{prop} \label{prop:A}
	(i) $A$ is a sub-bialgebra of the Hopf algebra $k[M]$. 

	(ii) The map $M \to \spec A$ is an open embedding. 

	(iii) $A$ is an integrally closed domain.

	(iv) The algebra $A$ is finitely generated.

	(v) The homomorphism $A \to k[\wbar{T_\sub}]$ that takes a function to its restriction to $T_\sub$ is surjective.
\end{prop}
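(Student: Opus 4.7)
The main tool throughout is the observation that for any dominant $\check\lambda \in \check C \cap \check\Lambda_M^+$, all $T$-weights of the Weyl module $\Delta(\check\lambda)$ lie in the convex hull of $W_M \cdot \check\lambda$, which is contained in $\check C$ by $W_M$-stability and convexity. Consequently every matrix coefficient $m \mapsto \langle \xi, m v\rangle$ of $\Delta(\check\lambda)$ lies in $A$: after bi-translating by $(m_1,m_2)$ and decomposing the vector into $T$-weight components, the restriction of the translated function to $T_\sub$ is a linear combination of characters in $\check C \cap \check\Lambda$, hence in $R$.

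Parts (i) and (v) are direct. For (i), the subalgebra axioms follow from $R$ being a subalgebra containing constants; the definition of $A$ shows it is stable under both left and right translations by $M(\bar k)$, and the standard Hopf-algebraic argument then gives $\Delta(A) \subset (A \ot k[M]) \cap (k[M] \ot A) = A \ot A$. For (v), setting $m_1 = m_2 = 1$ shows the restriction map lands in $R$; conversely, each character $\check\lambda \in \check C \cap \check\Lambda$ has a dominant $W_M$-translate $\check\lambda^+ \in \check C$, and the matrix coefficient of $\Delta(\check\lambda^+)$ pairing a weight-$\check\lambda$ vector with its dual element lies in $A$ and restricts to $\check\lambda$ on $T_\sub$.

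For (iii), $A$ is a subring of the domain $k[M]$. For integral closure, any $f \in \mathrm{Frac}(A) \subset k(M)$ integral over $A$ must lie in $k[M]$ by normality of $M$, and for each $m_1, m_2 \in M(\bar k)$ the restriction $t \mapsto f(m_1 t m_2)$ is integral over $R$, hence lies in $R$ by Lemma \ref{lem:toricvar}(i). For (iv), choose finite generators $\check\lambda_1, \ldots, \check\lambda_n$ of the monoid $\check C \cap \check\Lambda_M^+ \cap \check\Lambda$ (Gordan's lemma), arranged so that $V := \bigoplus_i \Delta(\check\lambda_i)$ is a faithful $M$-representation, possible because $\check C$ spans $\check\Lambda^\bbQ$. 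Let $A' \subset A$ be the finitely generated subalgebra generated by matrix coefficients of $V$. Using the $M \xt M$-decomposition of $k[M]$ (Peter-Weyl in characteristic zero; good filtrations in positive characteristic), $A$ consists precisely of those sections whose $M \xt M$-isotypic components have highest weight in $\check C \cap \check\Lambda_M^+ \cap \check\Lambda$, and one sees that $A$ is integral over $A'$; combined with (iii), $A$ equals the integral closure of $A'$ in $k(M)$, hence is finitely generated.

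Finally for (ii), the determinant $\det_V \in k[M]$ restricts on $T_\sub$ to the sum (in additive notation) of all $T$-weights of $V$ with multiplicity, which lies in $\check C$ since $\check C$ is a cone; so $\det_V \in A$. Since $V$ is faithful, $M \hookrightarrow \mathrm{GL}(V)$ is a closed immersion and $k[M]$ is generated by matrix coefficients of $V$ together with $\det_V^{-1}$; thus $k[M] = A[\det_V^{-1}]$, exhibiting $M$ as the principal open subscheme $\{\det_V \neq 0\} \subset \spec A$. The main obstacle is the integrality step in (iv), where identifying $A$ with the integral closure of the concrete matrix-coefficient subalgebra requires care in positive characteristic, since one cannot directly invoke Peter-Weyl and must instead rely on good-filtration properties of the cone $\check C$.
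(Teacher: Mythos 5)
Your treatment of (i), (iii), (v), and the key observation that matrix coefficients of Weyl modules with highest weight in $\check C$ lie in $A$ (because all weights of $\Delta(\check\lambda)$ lie in the convex hull of $W_M\cdot\check\lambda\subset\check C$) is correct, and it matches the paper's auxiliary properties (a) and (b) of the subalgebra $A'$. The genuine gap is in (iv). The sentence ``one sees that $A$ is integral over $A'$'' is precisely the crux of the whole proposition, and your own closing remark concedes the point: the description of $A$ via $M\xt M$-isotypic components is only available in characteristic $0$, where it in fact gives $A=A'$ outright (the paper's property (a$'$)). In positive characteristic $k[M]$ has no isotypic decomposition, a good filtration only controls subquotients, and you offer no mechanism producing a monic equation over $A'$ for an arbitrary $f\in A$; ``rely on good-filtration properties of the cone $\check C$'' is a restatement of the problem, not a solution. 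The paper closes exactly this gap by a different, valuation-theoretic argument: setting $A''$ to be the integral closure of $A'$ in $k(M)$ (so $A''\subset A$ by normality of $A$), it checks $A\subset A''$ at every codimension-one prime of $A''$, using the decomposition $M(K)=M(O)\cdot T_\sub(K)\cdot M(O)$ for the corresponding discrete valuation ring $O$ to reduce to the torus, where property (b) (your argument for (v)) guarantees that the closures of $T_\sub$ in $\spec A$ and in $\spec A'$ coincide. Without this step, or some substitute for it, (iv) is unproved, and with it your finiteness conclusion.

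There is a secondary unjustified step in (ii). The computation that $\det_V\in A$ is fine, and the deduction $k[M]=A[\det_V^{-1}]$ would indeed exhibit $M$ as a principal open subset of $\spec A$ --- but it requires $M\to\mathrm{GL}(V)$ to be a closed immersion, i.e.\ scheme-theoretically faithful. You justify faithfulness only by the fact that the weights of $V$ generate $\check\Lambda$, which controls the intersection of the kernel with a maximal torus; in positive characteristic a nontrivial normal subgroup scheme of a connected reductive group can meet a maximal torus trivially (infinitesimal kernels), so this does not yet give faithfulness of $V=\bigoplus_i\Delta(\check\lambda_i)$, nor the closed-immersion property you need. This point is fixable with more care (or by enlarging the collection of modules, as the paper's choice in (a$''$) implicitly does, since $A'\subset A\subset k[M]$ lets one deduce (ii) from openness of $M\to\spec A'$), but as written it is an assertion, not a proof.
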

\begin{proof} 
All statements can be checked after base change to $\bar k$, so we will assume that $k$ is
algebraically closed.

	Let $A'$ denote the subalgebra of $k[M]$ generated by the matrix coefficients of 
a \emph{finite} collection of 
Weyl\footnote{One can also take dual Weyl modules.} $M$-modules whose
highest weights belong to $\check C \cap \check\Lambda^+_G$ and generate $\check\Lambda^+_G$ 
as a semigroup. The following properties of $A'$ are easy to check:

(a) $A' \subset A$;

(a$'$) if $k$ has characteristic $0$ then $A' = A$;

(a$''$) the morphism $M \to \spec A'$ is an open embedding; 

(b) the composition $A' \into A \to R$ is surjective;

(c) the algebra $A'$ is finitely generated.
\smallskip

\noindent
The proof of statement (i) in the proposition is standard. 
Since $A' \subset A \subset k[M]$, property (a$''$) implies statement (ii). 
Statement (iii) follows from the normality of $M$ and the normality
part of Lemma~\ref{lem:toricvar}(i). Statement (v) follows from (b).

Let $A''$ denote the integral closure of $A'$ in the function field of $M$. 
Without any assumptions on the characteristic of $k$, we claim that
$A = A''$. By (ii)-(iii), it suffices to check that $A$ is contained in the
localization $O$ of $A''$ at any codimension $1$ prime. 
Let $K$ denote the field of fractions of $O$, which is also the field of rational functions on $M$. 
Then the normalization map $\spec A'' \to \spec A'$ induces a map $f' : \spec O \to \spec A'$ 
with $f'(\spec K) \subset M$. 
We wish to lift $f'$ to a morphism $f: \spec O \to \spec A$. Since
\[ M(K) = M(O) \cdot T_\sub(K) \cdot M(O) \]
we can assume that $f'(\spec K) \subset T_\sub(K)$. Then the existence of $f$ follows from (b), 
which says that the closure of $T_\sub$ in $\spec A$ maps isomorphically onto the closure of 
$T_\sub$ in $\spec A'$.
Therefore $A = A''$, and statement (iv) now follows.
\end{proof}


\subsubsection{The algebraic monoid $\wbar M$} Now set $\wbar M := \spec A$. 

By Proposition~\ref{prop:A}, $\wbar M$ is a normal affine algebraic monoid equipped
with an open embedding $M \into \wbar M$ with dense image. 
By part (v) of the proposition, the closed embedding $T_\sub \into M$ extends 
to a closed embedding $\wbar{T_\sub} \into \wbar M$.
By construction, the Renner cone corresponding to $\wbar M$ is $\check C$.

Since $\wbar M$ is an irreducible monoid and $M$ is an open dense subgroup, $M$
is necessarily the group of units of $\wbar M$.  
The classification theorem of L.~Renner (\cite[Theorem 5.4]{Renner}) 
says that every normal algebraic monoid with group of units $M$ is isomorphic to 
a monoid $\wbar M$ of the above form.

\section{The monoid associated to a parabolic subgroup} \label{sect:monoid}
Let $P$ be a parabolic subgroup of $G$ with Levi quotient $M:=P/U(P)$. 
We will define a canonical normal reductive monoid $\wbar M$ with group of units $M$. 
This monoid appears implicitly in \cite{BG, BFGM}, and it is
explicitly considered in \cite[\S 3.3]{AT} (in characteristic $0$) and in \cite[\S 6]{Baranovsky}. 

\smallskip

We identify the abstract Cartans of $G$ and $M$ as follows: 
for a Borel subgroup $B_M \subset M_{\bar k}$, the subgroup $B := B_M U(P) \subset 
G_{\bar k}$ is a Borel subgroup, and $T_{\bar k} = B/U(B) = B_M / U(B_M)$. 

\subsection{The Renner cone of $\wbar M$}
We first give a combinatorial definition of $\wbar M$ using Renner's classification,
recalled in \S\ref{sect:recall}. 
We will specify the Renner cone $\check C \subset \check\Lambda^\bbQ$,
from which one constructs $\wbar M$ as in \S\ref{sect:recall}.

\subsubsection{The submonoid $\Lambda^\pos_{U(P)}$}
Let $\Lambda^\pos_{U(P)} \subset \Lambda$ denote the 
non-negative integral span of the positive coroots of $G$ that are not coroots of $M$.
The submonoid $\Lambda^\pos_{U(P)}$ is stable under the actions of $W_M$ and $\Gal(\bar k/k)$
because $M$ is defined over $k$. 
We use representation theory to show certain properties of $\Lambda^\pos_{U(P)}$ below.

Let $\check G$ (resp.~$\check M$) denote the Langlands dual group of $G$ (resp.~$M$) over $\bbC$.
Fix a maximal torus and a Borel subgroup containing it in the split group $\check G$. 
Then we may consider $\check M$ as a Levi subgroup of $\check G$.
Let $\check\fu_P$ denote the nilpotent Lie algebra 
corresponding to the positive coroots of $G$ that are not coroots of $M$. 
Then the symmetric algebra $\Sym(\check \fu_P)$ is a locally finite
$\check M$-module by the adjoint action,
and its set of weights equals $\Lambda^\pos_{U(P)}$.

\begin{lem} \label{lem:wthull}
	Let $\lambda,\lambda' \in \Lambda^+_M$ with $\lambda \le_M \lambda'$. 
If $\lambda' \in \Lambda^\pos_{U(P)}$, then $\lambda \in \Lambda^\pos_{U(P)}$.
\end{lem}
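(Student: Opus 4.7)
The plan is to leverage the representation-theoretic framework set up in the paragraph preceding the lemma, namely the locally finite $\check M$-module $V := \Sym(\check\fu_P)$, whose weight set (by that setup) equals $\Lambda^\pos_{U(P)}$. For each $\nu \in \Lambda^+_M$, let $V_\nu$ denote the irreducible complex $\check M$-module of highest weight $\nu$. The single representation-theoretic input I will use is the standard characterization: an $M$-dominant weight $\mu$ occurs in $V_\nu$ if and only if $\mu \le_M \nu$.

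Since $\check M$ is a complex reductive group, complete reducibility decomposes
\[
V \;=\; \bigoplus_{\nu \in \Lambda^+_M} V_\nu^{\oplus m_\nu}
\]
with non-negative integer multiplicities $m_\nu$. From the hypothesis $\lambda' \in \Lambda^+_M \cap \Lambda^\pos_{U(P)}$ I will extract a witnessing irreducible summand: as $\lambda'$ is a weight of $V$, it appears in some $V_\nu$ with $m_\nu > 0$, and the weight characterization applied to the $M$-dominant weight $\lambda'$ forces $\lambda' \le_M \nu$.

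Combining with the hypothesis $\lambda \le_M \lambda'$ yields $\lambda \le_M \nu$, and the same characterization in the reverse direction now shows that the $M$-dominant weight $\lambda$ occurs in $V_\nu$, hence in $V$, so $\lambda \in \Lambda^\pos_{U(P)}$ as desired.

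I do not expect a serious obstacle: the whole argument is a clean bidirectional application of the weight characterization of $\check M$-irreducibles, bootstrapped off the observation---already recorded before the lemma---that the weights of $\Sym(\check\fu_P)$ are exactly $\Lambda^\pos_{U(P)}$. A purely combinatorial attempt, by contrast, would have to show how subtraction of an $M$-simple coroot from an expression $\lambda' = \sum n_\alpha \alpha$ (over positive coroots of $G$ that are not coroots of $M$) can be reabsorbed into another such sum whenever the resulting weight stays $M$-dominant; the dual-group detour is precisely what sidesteps that bookkeeping.
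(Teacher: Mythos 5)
Your proof is correct and follows essentially the same route as the paper: decompose $\Sym(\check\fu_P)$ into irreducible $\check M$-modules, locate $\lambda'$ in an irreducible summand of highest weight $\nu$ (so $\lambda' \le_M \nu$), and use the standard fact that a dominant weight $\le_M \nu$ occurs in that irreducible to conclude $\lambda$ is a weight of $\Sym(\check\fu_P)$, i.e.\ lies in $\Lambda^\pos_{U(P)}$. No substantive difference from the paper's argument.
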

\begin{proof}
We have a decomposition of $\Sym(\check \fu_P)$ into irreducible highest
weight $\check M$-modules $L_{\check M}(\gamma)$.
Therefore $\lambda'$ is a weight in $L_{\check M}(\gamma)$ for some $\gamma \in \Lambda^+_M$,
and all the weights of $L_{\check M}(\gamma)$ lie in $\Lambda^\pos_{U(P)}$. 
Since $\lambda \in \Lambda^+_M$ and $\lambda \le_M \lambda' \le_M \gamma$, we deduce
that $\lambda$ is also a weight of $L_{\check M}(\gamma)$. 
Therefore $\lambda \in \Lambda^\pos_{U(P)}$.
\end{proof}

\begin{lem} \label{lem:posU}
The subset $\Lambda^\pos_{U(P)}\subset\Lambda$ is equal to the intersection of $w(\Lambda^\pos_G)$
for all $w \in W_M$. Consequently, 
$\Lambda^\pos_{U(P)} \cap (-\Lambda^+_M) = \Lambda^\pos_G \cap (-\Lambda^+_M)$. 
\end{lem}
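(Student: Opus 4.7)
For the easy inclusion $\Lambda^\pos_{U(P)}\subset\bigcap_{w\in W_M}w(\Lambda^\pos_G)$, I would use the standard fact that $W_M$ permutes the positive coroots of $G$ that are not coroots of $M$, making $\Lambda^\pos_{U(P)}$ a $W_M$-stable submonoid of $\Lambda^\pos_G$; hence $\Lambda^\pos_{U(P)}=w(\Lambda^\pos_{U(P)})\subset w(\Lambda^\pos_G)$ for every $w\in W_M$.

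For the reverse inclusion---the main content---given $\lambda\in\bigcap_w w(\Lambda^\pos_G)$, I would first use $W_M$-stability of both sides to reduce to the case $\lambda\in\Lambda^+_M$. The plan is then to invoke Lemma~\ref{lem:wthull}, for which it suffices to exhibit some $\gamma\in\Lambda^+_M\cap\Lambda^\pos_{U(P)}$ with $\gamma\geq_M\lambda$. Constructing such a $\gamma$ is the hardest step: a naive approach that compares $\alpha_i$-coefficients directly seems to require a delicate combinatorial analysis of $W_M$-actions on positive coroots. My construction avoids this by chaining two $M$-dominance comparisons. Since $\lambda\in w_0^M(\Lambda^\pos_G)$ and $\Lambda^\pos_G=\Lambda^\pos_M+\Lambda^\pos_{U(P)}$ as submonoids, I can write $w_0^M(\lambda)=\mu+\nu$ with $\mu\in\Lambda^\pos_M$ and $\nu\in\Lambda^\pos_{U(P)}$. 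Applying $w_0^M$ again, using $(w_0^M)^2=1$, the identity $w_0^M(\Lambda^\pos_M)=-\Lambda^\pos_M$, and the $W_M$-stability of $\Lambda^\pos_{U(P)}$, yields $\lambda=-\mu'+\nu'$ with $\mu'\in\Lambda^\pos_M$ and $\nu'\in\Lambda^\pos_{U(P)}$; thus $\nu'-\lambda=\mu'\in\Lambda^\pos_M$, i.e.\ $\nu'\geq_M\lambda$. Next, choose $w_1\in W_M$ sending $\nu'$ to its $M$-dominant $W_M$-translate $\gamma:=w_1(\nu')$. By $W_M$-stability, $\gamma\in\Lambda^\pos_{U(P)}$, and since the $M$-dominant element of a $W_M$-orbit is the maximum under $\leq_M$, one has $\gamma\geq_M\nu'\geq_M\lambda$, yielding the desired $\gamma$.

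For the stated consequence, the inclusion $\Lambda^\pos_{U(P)}\cap(-\Lambda^+_M)\subset\Lambda^\pos_G\cap(-\Lambda^+_M)$ is immediate from $\Lambda^\pos_{U(P)}\subset\Lambda^\pos_G$. Conversely, any $\lambda\in\Lambda^\pos_G\cap(-\Lambda^+_M)$ is $M$-antidominant, hence the minimum in its $W_M$-orbit under $\leq_M$, so $w(\lambda)-\lambda\in\Lambda^\pos_M\subset\Lambda^\pos_G$ for every $w\in W_M$; adding this to $\lambda\in\Lambda^\pos_G$ keeps one in $\Lambda^\pos_G$, so $\lambda\in\bigcap_w w(\Lambda^\pos_G)$, which by the main identity equals $\Lambda^\pos_{U(P)}$.
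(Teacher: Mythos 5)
Your proof is correct and is essentially the paper's argument: reduce by $W_M$-stability to a chamber, split $\Lambda^\pos_G$ into its $\Gamma_M$-coroot part and its $\Lambda^\pos_{U(P)}$ part, move by $w_0^M$, and conclude with Lemma~\ref{lem:wthull}, with the consequence deduced exactly as in the paper from $\lambda \le_M w\lambda$ for $M$-antidominant $\lambda$. The only difference is cosmetic: the paper works with the antidominant representative of $\lambda$ and observes that the $\Lambda^\pos_{U(P)}$-component is automatically $M$-antidominant (since $\langle \check\alpha_i,\alpha_j\rangle \le 0$ for $i \in \Gamma_M$, $j \in \Gamma_G \setminus \Gamma_M$), whereas you work with the dominant representative and instead dominantize $\nu'$ by an extra Weyl element, using that the $M$-dominant element of a $W_M$-orbit is maximal for $\le_M$.
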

\begin{proof}
Observe that $\Lambda^\pos_{U(P)}$ is $W_M$-stable and hence contained in 
$w(\Lambda^\pos_G)$ for all $w\in W_M$. 
To prove containment in the other direction, let $\lambda \in 
\bigcap_{w\in W_M} w(\Lambda^\pos_G)$. Replacing $\lambda$ by
an element in the same $W_M$-orbit, we may assume that
$\lambda \in -\Lambda^+_M$. By assumption $\lambda \in \Lambda^\pos_G$,
so we can write $\lambda = \lambda_1 + \lambda_2$ where
$\lambda_1$ is a linear combination of $\alpha_i$ for $i \in \Gamma_M$
 and $\lambda_2 \in \Lambda^\pos_{U(P)}$ is a linear combination
of $\alpha_j$ for $j \in \Gamma_G \setminus \Gamma_M$. 
Note that $\lambda_2 \in \Lambda^\pos_{U(P)} \cap (-\Lambda^+_M)$ and $\lambda \ge_M \lambda_2$. 
Then $w_0^M \lambda_2 \in \Lambda^\pos_{U(P)} \cap \Lambda^+_M$ and 
$w_0^M \lambda \le_M w_0^M \lambda_2$. Lemma~\ref{lem:wthull} implies that 
$w^M_0 \lambda \in \Lambda^\pos_{U(P)}$, and hence 
$\lambda \in \Lambda^\pos_{U(P)}$.
One deduces the second statement of the lemma from the first 
because $\lambda \in -\Lambda^+_M$ satisfies $\lambda \le_M w \lambda$ for all $w\in W_M$.
\end{proof}

\begin{lem} \label{lem:RennerMbar}
The submonoid $W_M \cdot \check\Lambda^+_G \subset \check\Lambda$ is dual to 
$\Lambda^\pos_{U(P)}$, i.e.,
\begin{equation} \label{eqn:sgpdual}
	W_M \cdot \check\Lambda^+_G = 
\{ \check\lambda \in \check\Lambda \mid 
\brac{\check\lambda, \mu} \ge 0 \emph{ for all } \mu \in \Lambda^\pos_{U(P)} \}. 
\end{equation}
\end{lem}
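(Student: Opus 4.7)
I will prove the two inclusions separately, relying on the $W_M$-stability of $\Lambda^\pos_{U(P)}$ already noted above and on the fact that the simple coroots $\alpha_j$ for $j \in \Gamma_G \setminus \Gamma_M$ lie in $\Lambda^\pos_{U(P)}$ (they are simple coroots of $G$ which are not coroots of $M$, so they generate $\Lambda^\pos_{U(P)}$ by definition).

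\emph{The inclusion $\subseteq$.} I take $\check\lambda = w\check\mu$ with $w \in W_M$ and $\check\mu \in \check\Lambda^+_G$, and any $\mu \in \Lambda^\pos_{U(P)}$. Then $\brac{\check\lambda,\mu} = \brac{\check\mu, w^{-1}\mu}$. Since $\Lambda^\pos_{U(P)}$ is $W_M$-stable, $w^{-1}\mu \in \Lambda^\pos_{U(P)} \subset \Lambda^\pos_G$; and since $\check\mu$ is $G$-dominant, it pairs non-negatively with $\Lambda^\pos_G$. This gives $\brac{\check\lambda,\mu} \ge 0$.

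\emph{The inclusion $\supseteq$.} Observe first that the defining condition of the right-hand side is $W_M$-invariant in $\check\lambda$, again because $\Lambda^\pos_{U(P)}$ is $W_M$-stable (so $\brac{w\check\lambda,\mu} = \brac{\check\lambda,w^{-1}\mu} \ge 0$ whenever the original inequality holds). Hence, given $\check\lambda$ on the right-hand side, I may replace it by a $W_M$-translate and assume $\check\lambda$ is $M$-dominant, i.e.~$\brac{\check\lambda,\alpha_i} \ge 0$ for all $i \in \Gamma_M$. For $j \in \Gamma_G \setminus \Gamma_M$, the simple coroot $\alpha_j$ belongs to $\Lambda^\pos_{U(P)}$, so the hypothesis gives $\brac{\check\lambda,\alpha_j} \ge 0$. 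Together these say $\check\lambda \in \check\Lambda^+_G$, and thus the original element lies in $W_M \cdot \check\Lambda^+_G$.

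\textbf{Obstacles.} There are essentially none: the proof is a direct application of the definitions plus the $W_M$-stability of $\Lambda^\pos_{U(P)}$. The only point that requires a moment's thought is the reduction step in the second inclusion, namely that the defining inequalities are themselves $W_M$-invariant, which is what allows the $W_M$-translation to an $M$-dominant representative. No input from Lemma~\ref{lem:wthull} or Lemma~\ref{lem:posU} is needed here; those lemmas are aimed at finer structural facts about $\Lambda^\pos_{U(P)}$ that will be used elsewhere.
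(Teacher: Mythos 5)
Your proof is correct and follows essentially the same route as the paper: exploit the $W_M$-stability of both sides to reduce to an $M$-dominant representative and check non-negativity of pairings against the coroots outside $M$ (the paper checks all positive coroots of $G$ that are not coroots of $M$, you use just the simple ones $\alpha_j$, $j \in \Gamma_G \setminus \Gamma_M$, which suffices). One harmless slip in a parenthetical: those simple coroots lie in $\Lambda^\pos_{U(P)}$ but do not generate it (for $G=\SL(3)$ with $\Gamma_M=\{1\}$, the coroot $\alpha_1+\alpha_2$ is in $\Lambda^\pos_{U(P)}$ but not in the span of $\alpha_2$); since you only use membership, the argument stands.
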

\begin{proof}
	Let $(\Lambda^\pos_{U(P)})^\vee$ equal the r.h.s.~of \eqref{eqn:sgpdual}, which is
evidently $W_M$-stable. If we consider an element in
$(\Lambda^\pos_{U(P)})^\vee\cap \check\Lambda^+_M$, then it pairs with positive coroots of 
$M$ to non-negative integers since the element is $M$-dominant, and it pairs
with all other positive coroots of $G$ to non-negative integers by definition of the dual. 
Thus $(\Lambda^\pos_{U(P)})^\vee\cap \check\Lambda^+_M
= \check\Lambda^+_G$, 
which implies that $(\Lambda^\pos_{U(P)})^\vee$ is the union of $w(\check\Lambda^+_G)$ for
all $w\in W_M$. 
\end{proof}

\begin{cor} \label{cor:saturated}
The submonoid $W_M \cdot \check\Lambda^+_G$ is saturated in $\check\Lambda$.
\end{cor}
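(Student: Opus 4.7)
The plan is to deduce the corollary directly from Lemma \ref{lem:RennerMbar}. Recall that a submonoid $S \subset \check\Lambda$ is \emph{saturated} if, whenever $n\check\lambda \in S$ for some integer $n \ge 1$ and some $\check\lambda \in \check\Lambda$, one has $\check\lambda \in S$.

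By Lemma \ref{lem:RennerMbar}, the submonoid $W_M \cdot \check\Lambda^+_G$ is cut out inside $\check\Lambda$ by the linear inequalities
\[
\brac{\check\lambda, \mu} \ge 0 \quad \text{for all } \mu \in \Lambda^\pos_{U(P)}.
\]
Suppose $n\check\lambda \in W_M \cdot \check\Lambda^+_G$ with $n \ge 1$. Then for every $\mu \in \Lambda^\pos_{U(P)}$, we have $n\brac{\check\lambda,\mu} = \brac{n\check\lambda,\mu}\ge 0$, and dividing by the positive integer $n$ gives $\brac{\check\lambda,\mu}\ge 0$. Hence $\check\lambda$ lies in the right-hand side of \eqref{eqn:sgpdual}, i.e., in $W_M \cdot \check\Lambda^+_G$.

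There is essentially no obstacle: the key content was already packaged into Lemma \ref{lem:RennerMbar}, which identifies $W_M \cdot \check\Lambda^+_G$ with the dual cone $(\Lambda^\pos_{U(P)})^\vee \cap \check\Lambda$. Saturation is a formal consequence of the fact that any submonoid defined by homogeneous linear inequalities $\brac{-,\mu}\ge 0$ with values in $\bbZ$ is automatically saturated in the ambient lattice.
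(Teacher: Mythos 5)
Your proof is correct and follows the same route the paper intends: the corollary is an immediate consequence of Lemma~\ref{lem:RennerMbar}, which exhibits $W_M \cdot \check\Lambda^+_G$ as the set of lattice points of the cone cut out by the inequalities $\brac{-,\mu}\ge 0$, $\mu \in \Lambda^\pos_{U(P)}$, and such a description makes saturation formal (the paper gives no separate proof for exactly this reason).
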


\subsubsection{The Renner cone $\check C$}
Set $\check C \subset \check\Lambda^\bbQ$ to be the convex rational polyhedral cone generated by 
$W_M \cdot \check\Lambda^+_G$. Lemma~\ref{lem:RennerMbar} implies that 
$\check C$ is preserved by the actions of $W_M$ and $\Gal(\bar k/k)$, and 
Corollary~\ref{cor:saturated} says that $\check C \cap \check\Lambda = W_M \cdot \check\Lambda^+_G$.

\subsubsection{Definition of $\wbar M$}\label{sect:defineMbar}
Set $\wbar M$ to be the normal reductive monoid with Renner cone $\check C$ 
constructed in Proposition~\ref{prop:A}. We will use this notation for the rest of the
article.

\subsection{Relation to $\wbar{G/U(P)}$} \label{sect:GUP}

In this subsection, we show (see Corollary~\ref{cor:M}) that $\wbar M$ is isomorphic to 
the monoid constructed in \cite[\S 3.3]{AT} and \cite[\S 6]{Baranovsky}. 

First we recall some facts about the homogeneous space $G/U(P)$. 

\subsubsection{} A scheme $S$ is \emph{strongly quasi-affine} if 
the canonical morphism $S \to \spec k[S]$ is an open embedding
and $k[S]$ is a finitely generated $k$-algebra.

F.~D.~Grosshans proved that the quotient variety $G/U(P)$ is strongly quasi-affine in 
\cite{Grosshans}.
Let $\wbar{G/U(P)} = \spec k[G/U(P)]$, where $k[G/U(P)]$ is the subalgebra
of right $U(P)$-invariant regular functions on $G$.

\subsubsection{Weights of $k[G/U(P)]$}   \label{sect:weights}

The Levi factor $M := P/U(P)$ acts on $G/U(P)$ from the right. 
Therefore we can consider $k[G/U(P)]$ as an $M$-module and ask what is the set
of weights\footnote{Let $V$ be an $M$-module over $k$. Choose a Borel subgroup $B_M \subset M_{\bar k}$ and a Cartan subgroup $T_{\sub,\bar k} \subset B_M$, which is isomorphic to 
$T_{\bar k} = B_M/U(B_M)$. We say that the set of weights 
of $V$ is the set of $T_{\sub,\bar k}$-eigenvalues of $V \ot \bar k$. This set 
does not depend on the choice of $(T_{\sub,\bar k}, B_M)$, so it can be considered
as a subset of $\check\Lambda$, which is preserved by $W_M$ and $\Gal(\bar k/k)$.} of this module with respect to the abstract Cartan of $M$.

\begin{lem} \label{lem:weights}
The set of weights of the $M$-module $k[G/U(P)]$ equals 
$W_M \cdot \check\Lambda^+_G \subset \check\Lambda$.
\end{lem}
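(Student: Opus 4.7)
\medskip

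\noindent\textbf{Proof plan.}
My plan is to reduce both inclusions to the corresponding statement for the base affine space $k[G/U(B)]$, where $B = B_M U(P)$ is the Borel of $G$ obtained from $B_M \subset M_{\bar k}$. The pivotal input is the identity $\mathrm{wts}(k[G/U(B)]) = \check\Lambda^+_G$ as a right $T$-module, which is essentially the content of Borel-Weil. The lower bound on this set is obtained by matrix coefficients: for each $\check\lambda \in \check\Lambda^+_G$, the Weyl module $\Delta_G(\check\lambda)$ is nonzero, its highest weight vector $v_{\check\lambda}$ is $U(B)$-invariant of weight $\check\lambda$, and for any nonzero $\xi \in \Delta_G(\check\lambda)^*$ the function $g \mapsto \brac{\xi, g\cdot v_{\check\lambda}}$ is a nonzero right $U(B)$-invariant right-$T$-weight-$\check\lambda$ vector in $k[G]$. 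The upper bound is obtained by the standard $\mathrm{SL}_2^{(i)}$-restriction trick: after left-translating by a suitable element of $G$, a weight-$\check\mu$ vector in $k[G/U(B)]$ restricts to a nonzero weight-$\brac{\check\mu, \alpha_i}$ vector in $k[\mathrm{SL}_2/U^+]$, forcing $\brac{\check\mu, \alpha_i} \ge 0$ for each $i$. Neither step requires a characteristic assumption.

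For $W_M \cdot \check\Lambda^+_G \subseteq \mathrm{wts}(k[G/U(P)])$: the inclusion $U(P) \subset U(B)$ yields a right-$T_\sub$-equivariant inclusion $k[G/U(B)] \hookrightarrow k[G/U(P)]$, so every $\check\lambda \in \check\Lambda^+_G$ is a weight of $k[G/U(P)]$. Since the right $M$-module structure makes the set of weights $W_M$-stable (through $N_M(T_\sub)$-conjugation), the whole orbit $W_M \cdot \check\Lambda^+_G$ appears.

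For the reverse inclusion, let $f \in k[G/U(P)]$ be a right $T_\sub$-weight vector of weight $\check\mu$; by $W_M$-symmetry I may assume $\check\mu \in \check\Lambda^+_M$. Inside the finite-dimensional right $M$-submodule $M \cdot f \subset k[G/U(P)]$, choose a $B_M$-highest weight vector $g$, necessarily of weight $\check\mu_g \in \check\Lambda^+_M$ with $\check\mu \le_M \check\mu_g$. Because $g$ is right-invariant under both $U(B_M)$ (being $B_M$-highest) and $U(P)$ (lying in $k[G/U(P)]$), it is right-$U(B)$-invariant, i.e.\ $g \in k[G/U(B)]$; hence $\check\mu_g \in \check\Lambda^+_G$ by the first paragraph. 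A short Cartan-matrix computation in the spirit of the proof of Lemma~\ref{lem:posU}—writing $\check\mu_g - \check\mu = \sum_{i \in \Gamma_M} c_i\check\alpha_i$ with $c_i \ge 0$, and using $\brac{\check\mu_g, \alpha_j} \ge 0$ together with $\brac{\check\alpha_i, \alpha_j} \le 0$ for $i \in \Gamma_M$, $j \in \Gamma_G \setminus \Gamma_M$—yields $\brac{\check\mu, \alpha_j} \ge 0$ for all such $j$, so $\check\mu \in \check\Lambda^+_G$.

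The principal obstacle is securing the identity $\mathrm{wts}(k[G/U(B)]) = \check\Lambda^+_G$ in arbitrary characteristic; once this is in hand, the rest is a routine manipulation of the right $M$-module structure together with the Cartan-matrix inequality.
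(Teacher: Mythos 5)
Your route is genuinely different from the paper's and is essentially sound. The paper passes to left $U(B^-)$-invariants, invokes the decomposition $k[U(B^-)\backslash G]=\bigoplus_{\check\lambda\in\check\Lambda^+_G}\nabla(\check\lambda)$, and uses the restriction injection $\nabla(\check\lambda)^{U(P)}\into\nabla_M(\check\lambda)$ coming from density of $U(B^-)P$; you instead reduce everything to the base affine space $k[G/U(B)]$, whose weight set you compute by hand (matrix coefficients of Weyl modules for the lower bound, the $\mathrm{SL}_2$-restriction trick for the upper bound), and you pass from right $U(P)$-invariance to right $U(B)$-invariance by producing a right $U(B_M)$-highest weight vector. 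Both arguments end with the same Cartan-matrix inequality $\brac{\check\alpha_i,\alpha_j}\le 0$ for $i\in\Gamma_M$, $j\in\Gamma_G\setminus\Gamma_M$. What your version buys is independence from the $\bigoplus\nabla(\check\lambda)$ decomposition (everything is elementary and characteristic-free); what it costs is the extra highest-weight-vector step, and that step is where you are imprecise.

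Concretely: the claim that a $B_M$-highest weight vector $g$ of the $M$-module generated by $f$ is \emph{necessarily} of weight $\check\mu_g\ge_M\check\mu$ is false for an arbitrary choice of $g$ in positive characteristic (in characteristic $0$ it is true, since $f$ then has nonzero projection to each isotypic constituent of $M\cdot f$, so $\check\mu$ is a weight of each constituent). In general a cyclic module generated by a weight vector of weight $\check\mu$ may contain $U(B_M)$-invariant weight vectors of weight not $\ge_M\check\mu$: already for $M=\mathrm{SL}_2$ in characteristic $2$, the dual of $\mathrm{Sym}^2$ of the standard module is generated by a weight vector of weight $2$ but contains a highest weight vector of weight $0$. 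Since the whole point of this lemma is arbitrary characteristic, this needs repair, and the repair is standard: replace the $M$-span of $f$ by the span of $f\cdot U(B_M)$. Right translation by the positive root subgroups of $M$ moves a weight vector of weight $\check\mu$ into $\check\mu+\check\Lambda^\pos_M$, so every weight of this finite-dimensional $B_M$-submodule is $\ge_M\check\mu$; by unipotence of $U(B_M)$ its space of invariants is nonzero, and it is $T_\sub$-stable, so it contains a $U(B_M)$-invariant weight vector $g$ with $\check\mu_g\ge_M\check\mu$. With this choice the rest of your argument (right $U(B_M)$- and $U(P)$-invariance give $g\in k[G/U(B)]$, hence $\check\mu_g\in\check\Lambda^+_G$, and then the Cartan-matrix inequality gives $\check\mu\in\check\Lambda^+_G$) goes through in all characteristics.
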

\begin{proof} We may assume that $k$ is algebraically closed. 
Choose a Borel subgroup $B$ contained in $P$ (so $B/U(P)$ is a Borel subgroup of $M$) and 
let $T_\sub \subset B$ be a maximal torus, which we identify with its image in $M$. 
The weights of $k[G/U(P)]$ are the $T_\sub$-eigenvalues with respect to right translations.
Let $k[G/U(P)]_{\check \gamma},\,\check\gamma \in \check\Lambda$, denote a weight
space.  

Note that $k[G/U(P)]_{\check \gamma}$ is a $G$-module by left translation.
Let $B^-$ denote the opposite Borel subgroup so that $B \cap B^- = T_\sub$.
By unipotence of $U(B^-)$, we deduce that $\check\gamma$ is a weight of $k[G/U(P)]$
if and only if $k[G/U(P)]_{\check \gamma}^{U(B^-)}\ne 0$. 
Hence we are reduced to studying the weight spaces 
of $k[G/U(P)]^{U(B^-)}$. By considering the $T$-action by left translation,
we have decompositions
\[ k[U(B^-)\bs G] = \bigoplus_{\check \lambda \in \check\Lambda^+_G} 
\nabla(\check\lambda),\quad \quad
k[U(B^-)\bs G ]^{U(P)} = \bigoplus_{\check \lambda \in \check\Lambda^+_G} 
\nabla(\check\lambda)^{U(P)} \]
where $U(P)$ acts by right translation. 
Since $U(B^-) P$ is dense in $G$, the restriction from $G$ to $P$ gives an injection 
\[ \nabla(\check\lambda)^{U(P)}
\into \nabla_M(\check\lambda),\] where $\nabla_M(\check\lambda)$ is the
dual Weyl $M$-module. 

We now prove the `only if' direction of the lemma. 
Suppose that $\check \gamma$ is a weight of $k[G/U(P)]$. Then $\check \gamma$ must be a weight of 
$\nabla_M(\check\lambda)$ for some $\check\lambda \in \check\Lambda_G^+$.
There exists $w\in W_M$ such that $w(\check\gamma) \in \check\Lambda_M^+$. Since
the set of weights of $\nabla_M(\check\lambda)$ is $W_M$-stable, 
$w(\check\gamma)$ is also a weight. Hence $w(\check\gamma) \le_M \check\lambda$. 
Since $\brac{\check\alpha_i, \alpha_j} \le 0$
for $i\in \Gamma_M, j\in \Gamma_G \setminus \Gamma_M$, we deduce that 
$w(\check\gamma) \in \check\Lambda^+_G$. 
This proves the `only if' direction of the lemma.

Conversely, suppose $\check\gamma$ is a weight such that $w(\check\gamma) \in \check\Lambda_G^+$
for some $w\in W_M$. Then $\check\lambda:=w(\check\gamma)$ is the highest weight
in $\nabla(\check\lambda)^{U(P)}$. Since the set of weights of an $M$-module
is $W_M$-stable, we conclude that $\check\gamma$ is a weight of $k[G/U(P)]$.
\end{proof}

\begin{cor}\label{cor:Uinv}
	For any $G$-module $V$, the weights of the $M$-module $V^{U(P)}$ are a subset
	of $W_M \cdot \check\Lambda^+_G$. 
\end{cor}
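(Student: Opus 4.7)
The plan is to deduce the corollary from Lemma~\ref{lem:weights} by realizing an arbitrary $G$-module $V$ as a submodule of a power of $k[G]$ via its comodule structure, and then taking $U(P)$-invariants.

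First, I would recall the standard embedding. For any finite-dimensional $G$-module $V$, consider the $k$-linear map
\[ \phi: V \to k[G] \otimes V, \qquad \phi(v)(g) = g \cdot v. \]
A direct computation $\phi(hv)(g) = ghv = \phi(v)(gh)$ shows that $\phi$ is $G$-equivariant when the source carries its given $G$-action and the target is given the $G$-action by right translation on $k[G]$ (and trivial action on $V$). After choosing a basis of $V$, this exhibits $V$ as a $G$-submodule of $k[G]^{\oplus \dim V}$.

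Next, I would take $U(P)$-invariants on both sides. Since $\phi$ is $G$-equivariant and hence $U(P)$-equivariant (with the two different $U(P)$-actions as described), passing to invariants yields an embedding
\[ V^{U(P)} \hookrightarrow \bigl(k[G]^{\oplus \dim V}\bigr)^{U(P)} = k[G/U(P)]^{\oplus \dim V}. \]
The essential point is that this remains $M$-equivariant: the $P$-action on $V$ (restricted from $G$) matches, under $\phi$, the $P$-action on $k[G]$ by right translation, and both descend to the same $M$-action after taking $U(P)$-invariants. The only subtle step is confirming the $P$-equivariance of $\phi$ with respect to the right translation action, but this is exactly the content of the computation above.

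Finally, I would apply Lemma~\ref{lem:weights}: the set of weights of the $M$-module $k[G/U(P)]^{\oplus \dim V}$ coincides with that of $k[G/U(P)]$, which equals $W_M \cdot \check\Lambda^+_G$. Since the set of weights of a submodule of an $M$-module is a subset of the weights of the ambient module, we conclude that the weights of $V^{U(P)}$ are contained in $W_M \cdot \check\Lambda^+_G$, as desired. There is no real obstacle here; the argument is a direct reduction to the lemma, and the only care needed is in tracking the two a priori different actions relevant to the embedding $\phi$.
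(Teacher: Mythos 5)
Your proof is correct and is essentially the paper's argument: the paper simply cites the standard fact that any finite-dimensional $G$-module embeds into a direct sum of copies of the regular representation $k[G]$ and then invokes Lemma~\ref{lem:weights}, whereas you spell out that embedding via $\phi(v)(g)=g\cdot v$ and verify the equivariance needed to match the $M$-actions after taking $U(P)$-invariants. No gaps; your extra care with the right-translation action is exactly the implicit content of the paper's one-line proof.
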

\begin{proof}
Any finite dimensional $G$-module $V$ is a submodule of a direct sum of 
regular representations $k[G]$, so the weights of $V^{U(P)}$
are a subset of the weights of $k[G/U(P)]$. 
\end{proof}

\subsubsection{The closure of $M$ in $\wbar{G/U(P)}$} 
The subgroup $P \subset G$ induces a closed embedding 
\begin{equation} \label{eqn:PsubG} 
	M = P/U(P) \into G/U(P),
\end{equation}
i.e., we embed $M$ in $G/U(P)$ by the right $M$-action on $1\in G$.
Then the closure of $M$ in $\wbar{G/U(P)}$ has the structure of an irreducible algebraic 
monoid\footnote{This monoid is denoted by $M_+$ in \cite{Baranovsky}.}, and the right 
action of $M$ on $G/U(P)$ extends to an action of this monoid
on $\wbar{G/U(P)}$. 
We claim that the normalization of this monoid
is isomorphic to the monoid $\wbar M$ defined in \S\ref{sect:defineMbar}:

\begin{lem} \label{lem:normMbar}
The embedding \eqref{eqn:PsubG} extends to a finite map $\wbar M \to \wbar{G/U(P)}$.
\end{lem}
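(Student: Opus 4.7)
The plan is to construct the desired map at the level of algebras as a restriction, and then deduce finiteness from the integral-closure description of $A = k[\wbar M]$ established in the proof of Proposition~\ref{prop:A}. After fixing a Levi decomposition $M \subset P$, the embedding \eqref{eqn:PsubG} arises by composing $M \hookrightarrow G$ with $G \twoheadrightarrow G/U(P)$, so restriction of functions gives a $k$-algebra homomorphism $r : k[G/U(P)] \to k[M]$. The first step is to verify that $r$ factors through $A \subset k[M]$: for $f \in k[G/U(P)]$ and $m_1, m_2 \in M(\bar k)$, the translate $g \mapsto f(m_1 g m_2)$ still lies in $k[G/U(P)]$ (because $M$ normalizes $U(P)$), and by Lemma~\ref{lem:weights} its restriction to $T_\sub$ has weights contained in $W_M \cdot \check\Lambda_G^+ = \check C \cap \check\Lambda$, hence lies in $R$. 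This is precisely the defining condition for $r(f) \in A$, so restriction refines to $\tilde r : k[G/U(P)] \to A$ and induces a morphism $\wbar M \to \wbar{G/U(P)}$ which extends \eqref{eqn:PsubG} tautologically.

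To prove finiteness, my strategy is to produce a finitely generated subalgebra $A'' \subset \tilde r(k[G/U(P)])$ so that $A$ is integral over $A''$. Combined with Proposition~\ref{prop:A}(iv), this will force $A$ to be a finite $A''$-module and hence a finite $k[G/U(P)]$-module via $\tilde r$. I will construct $A''$ from matrix coefficients: for each $\check\lambda$ in a finite generating set of $\check\Lambda_G^+$, a highest weight vector $v_\lambda \in \Delta(\check\lambda)$ is fixed by $U(B) \supset U(P)$, so $g \mapsto \xi(g v_\lambda)$ belongs to $k[G/U(P)]$ for every $\xi \in \Delta(\check\lambda)^*$, and its restriction to $M$ is a matrix coefficient of the highest weight $M$-submodule $M \cdot v_\lambda \subset \Delta(\check\lambda)$. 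Letting $A''$ be the subalgebra generated by these restrictions, the verification that $A$ is integral over $A''$ will then run parallel to the argument for $A'$ at the end of the proof of Proposition~\ref{prop:A}, with the submodules $M \cdot v_\lambda$ substituted for the Weyl $M$-modules $\Delta_M(\check\lambda)$ used there.

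The main obstacle I expect is verifying the analogues, for $A''$, of properties (a$''$) and (b) of $A'$ in that proof. Property (b)---surjectivity of the restriction $A'' \to R$---should follow because $M \cdot v_\lambda$, being a highest weight $M$-module, contains vectors of weight $w \check\lambda$ for every $w \in W_M$, obtained by applying representatives of $W_M$ in $N_M(T_\sub)$ to $v_\lambda$. Property (a$''$)---openness of $M \hookrightarrow \spec A''$---amounts to the joint faithfulness of the family $\{M \cdot v_\lambda\}$ separating points of $M$, which can be arranged by enlarging the generating set of $\check\Lambda_G^+$ sufficiently. An additional subtlety in positive characteristic is that the $M$-submodule $M \cdot v_\lambda$ may be a proper quotient of the Weyl module $\Delta_M(\check\lambda)$ and hence yield fewer matrix coefficients than one has in characteristic zero; however, since the integral-closure argument in the proof of Proposition~\ref{prop:A}(iv) depends only on the weight-theoretic inputs just described, this should not obstruct the argument but does require care.
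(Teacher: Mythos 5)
Your route is genuinely different from the paper's proof, which never works with the algebra $A$ directly: it computes (via Lemma~\ref{lem:weights}) that the closure of $T_\sub$ in $\wbar{G/U(P)}$ has associated cone $\check C$, identifies $\wbar M$ with the normalization of the closure of $M$ in $\wbar{G/U(P)}$ by Renner's classification, and gets finiteness from finiteness of normalization. Your first step (that restriction $k[G/U(P)] \to k[M]$ factors through $A$) is correct. The finiteness half, however, has gaps at exactly the two points you flag. With $A''$ as you literally define it --- generated by the restrictions of $g \mapsto \xi(g v_\lambda)$, i.e.\ coefficients against the single vector $v_\lambda$ --- property (b) fails: each such function restricts on $T_\sub$ to a multiple of the character $\check\lambda$, so the image of $A''$ in $k[T_\sub]$ contains only the characters in the monoid generated by your set $S$, i.e.\ in $\check\Lambda^+_G$, not in $W_M \cdot \check\Lambda^+_G$. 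Your sketch of (b) implicitly uses coefficients against the extreme-weight vectors $\dot w v_\lambda$, and for those to lie in $\tilde r(k[G/U(P)])$ you must check that all of $M \cdot v_\lambda$, not just $v_\lambda$, is fixed by $U(P)$. This is true, since $u\, m v_\lambda = m (m^{-1} u m) v_\lambda = m v_\lambda$ because $M$ normalizes $U(P)$, but it is precisely the step that puts the needed functions inside $k[G/U(P)]$, so it cannot be left out.

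More seriously, (a$''$) is asserted rather than proved, and the proposed justification is insufficient: separating $\bar k$-points does not exclude an infinitesimal scheme-theoretic kernel in positive characteristic, and if $M \to \spec A''$ fails to be an open embedding (so that the fraction field of $A''$ is smaller than $k(M)$), the valuative argument in the proof of Proposition~\ref{prop:A} breaks down --- the $K$-point of $\spec A''$ need not lift to $M(K)$, so the step using $M(K) = M(O)\cdot T_\sub(K)\cdot M(O)$ is unavailable; and ``enlarging $S$'' is not shown to cure this, since a priori each $\langle M \cdot v_\lambda \rangle$ could be a small quotient of $\Delta_M(\check\lambda)$. The clean repair, which also removes your final worry, is to show that $\langle M \cdot v_\lambda \rangle \cong \Delta_M(\check\lambda)$: the map $\Delta_M(\check\lambda) \to \Delta(\check\lambda)$ sending highest weight vector to $v_\lambda$ has image $\langle M \cdot v_\lambda\rangle$, and its composition with the projection $\Delta(\check\lambda) \to \Delta(\check\lambda)_{U(P^-)} \cong \Delta_M(\check\lambda)$ is nonzero on the highest weight line, hence a nonzero scalar because $\End_M(\Delta_M(\check\lambda)) = k$; so the map is injective. (Take the input $\Delta(\check\lambda)_{U(P^-)} \cong \Delta_M(\check\lambda)$ from the standard literature, not from the remark following Proposition~\ref{prop:repM+}: that remark rests on Theorem~\ref{thm:M}, which rests on the present lemma, so quoting it here would be circular.) With this, your $A''$ is, for the same choice of generating set, exactly the algebra $A'$ in the proof of Proposition~\ref{prop:A} for the cone $\check C$ (note $\check C \cap \check\Lambda \cap \check\Lambda^+_M = \check\Lambda^+_G$ by Lemma~\ref{lem:RennerMbar} and Corollary~\ref{cor:saturated}), its properties (a)--(c) hold verbatim, and since that proof shows $A$ equals the integral closure of $A'$ in $k(M)$, hence is a finite $A'$-module, finiteness of $\wbar M \to \wbar{G/U(P)}$ follows as you intend.
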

\begin{proof}
Let $T_\sub$ be a Cartan subgroup of $M$ and embed $T_\sub \into G/U(P)$
using \eqref{eqn:PsubG}. Let $\wbar{T_\sub}$ denote the
closure of $T_\sub$ in $\wbar{G/U(P)}$. By the classification of normal reductive monoids 
in \cite[Theorem 5.4]{Renner}, it suffices to show that the cone corresponding
by \cite{KKMS} to $\wbar{T_\sub}$ is the Renner cone $\check C$ of $\wbar M$.

By definition, $\wbar{T_\sub}$ is the spectrum of the image of the restriction map 
$k[G/U(P)] \to k[T_\sub]$. 
This map is equivariant with respect to right translations by $T_\sub$, so 
$\bar k[\wbar{T_\sub}]$ decomposes into weight spaces.
Let $\check\gamma$ be a weight of $\bar k[G/U(P)]$. 
By left translation by $G$, one can find $f \in \bar k[G/U(P)]_{\check \gamma}$ such that 
$f(1)=1$. Therefore the weights of $k[\wbar{T_\sub}]$ coincide with the weights of $k[G/U(P)]$,
and the claim follows from Lemma~\ref{lem:weights}.
\end{proof}

\subsubsection{} Fix a parabolic subgroup $P^- \subset G$ opposite to $P$.
For the rest of this section we will identify $M$ with the Levi subgroup $P \cap P^-$.

\begin{thm} \label{thm:M}
The composition 
\[ \wbar M \to \wbar{G/U(P)} \to \spec k[G]^{U(P^-) \xt U(P)} \] 
is an isomorphism, where  
$U(P^-) \xt U(P)$ acts on $k[G]$ by left and right translations, respectively. 
\end{thm}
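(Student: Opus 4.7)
Let $\alpha \colon k[G]^{U(P^-) \xt U(P)} \to A = k[\wbar M]$ denote the algebra map dual to the composition in the theorem. The strategy is to show $\alpha$ is an isomorphism by establishing three claims in order: (i) $\alpha$ is injective, (ii) the image of $\alpha$ is contained in $A$, and (iii) the image equals $A$.

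For (i), the multiplication map $U(P^-) \xt M \xt U(P) \to G$ is an open embedding onto the big Bruhat cell of the pair $(P, P^-)$. Pulling back functions and then taking invariants under $U(P^-) \xt U(P)$ (acting by left translation on $k[U(P^-)]$ and right translation on $k[U(P)]$) yields an injection
\[
k[G]^{U(P^-) \xt U(P)} \into k[M]
\]
equal to restriction along the closed embedding $M = P \cap P^- \into G$. This coincides with $\alpha$ followed by the tautological inclusion $A \subset k[M]$, so $\alpha$ is injective.

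For (ii), since $M$ normalizes both $U(P^-)$ and $U(P)$, the $M \xt M$-action on $k[G]$ by left and right translation preserves $k[G]^{U(P^-) \xt U(P)}$, and $\alpha$ is $M \xt M$-equivariant. By the characterization of $A$ in \S\ref{sect:R}, it suffices to check that $f|_{T_\sub} \in R$ for every $f \in \bar k[G]^{U(P^-) \xt U(P)}$; the case of arbitrary $m_1, m_2 \in M(\bar k)$ follows by applying this to the translate $L_{m_1} R_{m_2} f$. The space $\bar k[G]^{U(P^-) \xt U(P)}$ is locally finite under $T_\sub \xt T_\sub$, and by Lemma~\ref{lem:weights} applied to $P$ and, symmetrically, to $P^-$, the right (resp.~left) $T_\sub$-weights lie in $W_M \cdot \check\Lambda^+_G = \check C \cap \check\Lambda$. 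Since $\bar k[T_\sub]$ has only diagonal $T_\sub \xt T_\sub$-weights, the restriction to $T_\sub$ of a $(\check\chi_1, \check\chi_2)$-weight component vanishes unless $\check\chi_1 = \check\chi_2$, in which case it is a scalar multiple of $t^{\check\chi_1} \in R$.

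For (iii), I compare good filtrations after base change to $\bar k$. The algebra $\bar k[G]$ admits a good filtration as a $G \xt G$-module with associated graded $\bigoplus_{\check\lambda \in \check\Lambda^+_G} \Delta(\check\lambda) \boxtimes \nabla(\check\lambda)$. Taking $U(P^-) \xt U(P)$-invariants preserves good filtrations, and the standard identifications $\Delta(\check\lambda)^{U(P^-)} \cong \Delta_M(\check\lambda)$ and $\nabla(\check\lambda)^{U(P)} \cong \nabla_M(\check\lambda)$ as $M$-modules (valid for $\check\lambda \in \check\Lambda^+_G$) identify the associated graded of $\bar k[G]^{U(P^-) \xt U(P)}$ with $\bigoplus_{\check\lambda \in \check\Lambda^+_G} \Delta_M(\check\lambda) \boxtimes \nabla_M(\check\lambda)$. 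On the other hand, $\bar k[\wbar M]$ carries a good filtration as $M \xt M$-module with associated graded $\bigoplus_{\check\mu \in \check C \cap \check\Lambda^+_M} \Delta_M(\check\mu) \boxtimes \nabla_M(\check\mu)$, and by Lemma~\ref{lem:RennerMbar} the indexing set equals $\check\Lambda^+_G$. The two associated gradeds agree; since $\alpha$ is $M \xt M$-equivariant, filtration-preserving, and injective, it must be an isomorphism. The main obstacle is this final step, specifically justifying the good filtration structure of $\bar k[\wbar M]$ and the identifications of $U(P^\pm)$-invariants of (dual) Weyl $G$-modules with (dual) Weyl $M$-modules; once these representation-theoretic inputs are in hand, the comparison is formal.
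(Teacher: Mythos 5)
Your steps (i) and (ii) are fine: restriction to $M$ is injective on $U(P^-)\xt U(P)$-invariants because $U(P^-)\,M\,U(P)$ is dense in $G$, and the weight argument (via Lemma~\ref{lem:weights} and $M\xt M$-equivariance) correctly places the image inside $A=k[\wbar M]$. The genuine gap is step (iii), and you have flagged it yourself: the two inputs you defer --- the good-filtration structure of $\bar k[\wbar M]$ with associated graded indexed by $\check C\cap\check\Lambda^+_M=\check\Lambda^+_G$, and the behaviour of $U(P^\pm)$-invariants on (dual) Weyl modules and on good-filtration modules --- are not routine in positive characteristic; they are the heart of the matter. The claim that taking $U(P^-)\xt U(P)$-invariants ``preserves good filtrations'' is a theorem of van der Kallen/Donkin/Mathieu, not a formal manipulation (one needs vanishing statements to keep the filtration exact), and the asserted good filtration of the coordinate ring of a normal reductive monoid with prescribed factors is essentially Doty's theorem, whose difficulty is comparable to the statement you are proving. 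There are also concrete errors in the bookkeeping: $k[G]$ has a $G\xt G$-filtration with factors $\nabla(\check\lambda)\otimes\nabla(-w_0\check\lambda)$, not $\Delta(\check\lambda)\otimes\nabla(\check\lambda)$, and $\Delta(\check\lambda)^{U(P^-)}$ is \emph{not} $\Delta_M(\check\lambda)$ (the invariants sit at the lowest weight, so one gets a twist by $w_0^Mw_0$; it is the $U(P^-)$-\emph{coinvariants} that give $\Delta_M(\check\lambda)$, cf.\ the remark after Proposition~\ref{prop:repM+}). These discrepancies wash out at the level of characters, but then your final inference ``injective, filtration-preserving, same associated graded $\Rightarrow$ isomorphism'' would have to be recast as a character count (finite, equal weight multiplicities on both sides), and you never justify that $\alpha$ is strictly compatible with the two filtrations in the first place.

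By contrast, the paper avoids all of this machinery: it proves directly that $\tilde M:=\spec k[G]^{U(P^-)\xt U(P)}$ is itself a normal reductive monoid with group of units $M$ (openness of $M\into\tilde M$ by an explicit big-cell computation, Lemma~\ref{lem:Mopen}; normality because $k[\tilde M]$ lies in $k(G)$ and $G$ is normal), then identifies the closure of $T_\sub$ in $\tilde M$ with the closure of $T_\sub$ in $\wbar{G/U(P)}$, i.e.\ with the toric variety attached to the Renner cone $\check C$, and concludes by Renner's classification. If you want to salvage your route, you must either prove or cite the good-filtration results above (Donkin--Koppinen for $k[G]$, van der Kallen for $U$-invariants, Doty for normal monoids), fix the $\Delta$/$\nabla$ identifications, and replace the associated-graded comparison by an honest $T_\sub\xt T_\sub$-character comparison; as written, the surjectivity of $\alpha$ is not established.
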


Note that $\spec k[G]^{U(P^-) \xt U(P)}$ is the affine GIT quotient of $\wbar{G/U(P)}$ by 
the left action of $U(P^-) \subset G$. 

\begin{cor} \label{cor:M}
	The (unique) map $\wbar M \to \wbar{G/U(P)}$ extending the embedding \eqref{eqn:PsubG} 
is a retract. In particular, it is a closed embedding.
\end{cor}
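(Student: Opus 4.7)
The plan is to read the corollary off directly from Theorem~\ref{thm:M}. Write $i \colon \wbar M \to \wbar{G/U(P)}$ for the map in question and $q \colon \wbar{G/U(P)} \to \spec k[G]^{U(P^-) \xt U(P)}$ for the affine GIT quotient by the left $U(P^-)$-action. Theorem~\ref{thm:M} states that $\phi := q \circ i$ is an isomorphism. Setting $r := \phi^{-1} \circ q$, one computes $r \circ i = \phi^{-1} \circ q \circ i = \phi^{-1} \circ \phi = \id_{\wbar M}$, which exhibits $\wbar M$ as a retract of $\wbar{G/U(P)}$ with retraction $r$.

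For the closed embedding assertion, I would pass to global sections: the equality $r \circ i = \id_{\wbar M}$ yields $i^* \circ r^* = \id$ on coordinate rings, so $i^* \colon k[\wbar{G/U(P)}] \to k[\wbar M]$ admits $r^*$ as a right inverse and is in particular surjective. Since both schemes are affine, this is exactly the statement that $i$ is a closed embedding. (An alternative, more categorical, argument: any section of a morphism between separated schemes is a closed embedding, being the equalizer of $\id$ and $i \circ r$; affine schemes are separated, so this applies.)

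For the uniqueness of the extension referenced in the statement, I would invoke that $M$ is dense open in $\wbar M$ and that $\wbar{G/U(P)}$ is separated, so that any two extensions of the embedding \eqref{eqn:PsubG} must agree on the dense subset $M$ and hence everywhere.

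The main obstacle is not the corollary itself, which is essentially formal, but Theorem~\ref{thm:M} on which it rests; in particular, the nontrivial content is the identification of the algebra $k[G]^{U(P^-) \xt U(P)}$ of biinvariants with the subalgebra $A$ defining $\wbar M$, and this is what justifies invoking the factorization $\phi = q \circ i$ as an isomorphism.
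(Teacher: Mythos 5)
Your argument is correct and coincides with the paper's own proof: the retraction is read off from Theorem~\ref{thm:M} by composing the GIT quotient map with the inverse of the isomorphism, and the closed-embedding claim follows because $i^* \circ r^* = \id$ forces $k[G/U(P)] \to k[\wbar M]$ to be surjective (the paper phrases this as the subalgebra $k[G]^{U(P^-) \xt U(P)}$ surjecting onto $k[\wbar M]$). Your added remarks on uniqueness of the extension are fine but not needed beyond what the paper records.
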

\begin{proof} The fact that $\wbar M$ is a retract of $\wbar{G/U(P)}$ follows
	immediately from the isomorphism in Theorem~\ref{thm:M}. 
To prove that it is a closed subscheme, it suffices to show that the algebra map $k[G/U(P)] \to k[\wbar M]$ is surjective.
The theorem implies that the subalgebra $k[G]^{U(P^-) \xt U(P)} \subset k[G/U(P)]$
surjects onto $k[\wbar M]$.
\end{proof}

For the purpose of proving Theorem~\ref{thm:M}, let $\tilde M = \spec k[G]^{U(P^-) \xt U(P)}$. 
The actions of $M$ on $G$ by left and right translations induce $M$-actions on $\tilde M$.
We have a canonical $M \xt M$ equivariant map $G \to \tilde M$.

\begin{lem} \label{lem:Mopen}
The composition $M \to G \to \tilde M$ is an open embedding.
\end{lem}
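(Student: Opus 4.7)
The plan is to identify $M$ with a distinguished open affine subscheme of $\tilde M$ by producing a single $(U(P^-) \xt U(P))$-invariant function $c \in k[G]$ whose non-vanishing locus is the big cell $\Omega := U(P^-) \cdot M \cdot U(P) \subset G$. Since the assertion may be checked after base change to $\bar k$, I would assume $k$ is algebraically closed throughout. The key preliminary fact is the standard one that multiplication gives an open immersion $U(P^-) \xt M \xt U(P) \hookrightarrow G$ with image $\Omega$, so $\Omega$ is a trivial $(U(P^-) \xt U(P))$-torsor over $M$ and $k[\Omega]^{U(P^-) \xt U(P)} = k[M]$.

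To construct $c$, I would take a dominant weight $\check\lambda \in \check\Lambda^+_G$ vanishing on $\Gamma_M$ and strictly positive on $\Gamma_G \setminus \Gamma_M$ (e.g.\ $\check\lambda = \sum_{j \in \Gamma_G \setminus \Gamma_M} \check\omega_j$). Let $V = \nabla(\check\lambda)$, let $v^+ \in V$ be a highest weight vector, and let $\ell \in V^*$ be a lowest weight vector. Setting $c(g) := \ell(g v^+)$, the inclusions $U(P) \subset U(B)$ and $U(P^-) \subset U(B^-)$ imply that $U(P)$ kills $v^+$ while $U(P^-)$ fixes $\ell$, so $c \in k[G]^{U(P^-) \xt U(P)} = k[\tilde M]$.

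I would then verify that $\{c \ne 0\} = \Omega$ inside $G$. Since $\check\lambda$ vanishes on $\Gamma_M$ it extends to a character $\chi$ of $M$ (hence of $P$), so on $\Omega$ one computes $c(u^- m u) = \ell(m v^+) = \chi(m)\, \ell(v^+) \ne 0$. For the converse, $\check\lambda$ being strictly dominant off $\Gamma_M$ makes the orbit map $G/P \hookrightarrow \mathbb{P}(V)$, $gP \mapsto [g v^+]$, a closed embedding; for any $w \in W/W_M$ with $w \ne 1$ one has $w\check\lambda < \check\lambda$, so $U(P^-) \cdot w v^+$ lies in the sum of weight spaces of weight strictly less than $\check\lambda$. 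Since $\ell$ pairs nontrivially only with the $\check\lambda$-weight space, it vanishes on this image, and so $c$ vanishes on each non-open Schubert cell $U(P^-) w P$.

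The conclusion is then formal: since $c$ is invariant and localization commutes with taking invariants,
\[ k[\tilde M][c^{-1}] = \bigl( k[G][c^{-1}] \bigr)^{U(P^-) \xt U(P)} = k[\Omega]^{U(P^-) \xt U(P)} = k[M], \]
so the distinguished open $\{c \ne 0\} \subset \tilde M$ is identified with $M$, and $M \hookrightarrow \tilde M$ is an open immersion. I expect the main technical step to be the vanishing argument for $c$ off $\Omega$, which requires juggling the Bruhat stratification of $G/P$ together with the weight decomposition of $V$; everything else is essentially a formal manipulation.
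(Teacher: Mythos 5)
Your overall strategy is correct and is a genuinely more direct variant of the paper's argument. The paper passes to the toric quotient $\tilde T = \spec k[G]^{U(B^-) \times U(B)}$, shows (by the same kind of matrix-coefficient/weight computation you use for $c$) that the preimage of the open torus in $G$ is the small cell $B^-B$, concludes that $U(B^-_M)\times T_\sub \times U(B_M)$ is a common dense open subset of $M$ and $\tilde M$, and then still needs a last step -- translating by $M$ -- to get openness of all of $M$. By instead inverting a single coefficient $c=f_{\check\lambda}$ for one $\check\lambda$ whose stabilizer in $W$ is exactly $W_M$, you cut out the bigger cell $\Omega = U(P^-)MU(P)$ at once and identify $M$ with a principal open subset of $\tilde M$, so no translation step is needed; the nonvanishing of $c$ on $\Omega$ and the final localization-commutes-with-invariants step are fine (note that restriction to $M$ really is the map induced by $M \to \tilde M$, since an invariant function on $\Omega \cong U(P^-)\times M \times U(P)$ is the pullback of its restriction to $M$). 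One cosmetic caveat: for general reductive $G$ the element $\sum_{j\notin\Gamma_M}\check\omega_j$ need not lie in $\check\Lambda$; you only need some $\check\lambda \in \check\Lambda^+_G$ with $\brac{\check\lambda,\alpha_i}=0$ exactly for $i\in\Gamma_M$, which exists because the pairing with the simple coroots is surjective over $\bbQ$.

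There is, however, a real gap in the vanishing step as written: the sets $U(P^-)wP$ with $w \in W/W_M$, $w\neq 1$, do \emph{not} cover $G\setminus\Omega$, so vanishing of $c$ on them does not yet give $\{c\neq 0\}\subseteq \Omega$. Indeed, $U(P^-)$-orbits on $G/P$ need not contain points of the form $wP$: already for $G=\mathrm{SL}_3$ and $P$ a maximal parabolic, $U(P^-)$ acts trivially on the closed stratum of $G/P$ (a projective line), so only two of its points lie in $\bigcup_{w\neq 1}U(P^-)wP$. The fix is immediate: use the Bruhat decomposition in the form $G = \bigcup_{w\in W/W_M} B^-wP$, whose open piece ($w=1$) is exactly $\Omega$. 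Writing an element of $B^-wP$ as $\bar u t w p$ with $\bar u \in U(B^-)$, $t\in T$, $p\in P$, one gets $c(\bar u t w p) = \chi(p)(w\check\lambda)(t)\,\ell(\bar u\, w v^+)$, and $U(B^-)$ (not just $U(P^-)$) only lowers weights, so $\bar u\, wv^+$ lies in the span of weight spaces of weight $\le w\check\lambda$; since the stabilizer of $\check\lambda$ in $W$ is $W_M$, for $w\notin W_M$ one has $w\check\lambda \lneq \check\lambda$, so the $\check\lambda$-weight space does not occur and $\ell$ annihilates $\bar u\, wv^+$. With this correction your proof goes through.
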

\begin{proof} We may check the assertion after base change to $\bar k$, so 
we assume $k$ is algebraically closed. Choose Borel subgroups $B \subset P$ and 
$B^- \subset P^-$ such that $T_\sub := B \cap B^- \subset M$ is a maximal torus. 
Let \[ \tilde T = \spec k[G]^{U(B^-) \xt U(B)}.\]
Since $k[G]^{U(B^-)}$ has a decomposition into dual Weyl $G$-modules, one 
deduces that $k[\tilde T]$ has a basis formed by $f_{\check \lambda}$ for 
$\check\lambda \in \check\Lambda^+_G$, where $f_{\check\lambda}(t)=\check\lambda(t),\,
t\in T_\sub$. 
From this explicit description, one sees that $\tilde T$ is a toric variety containing 
$T_\sub$ as a dense open subscheme.

Consider the composition $G \to \tilde M \to \tilde T$ and 
let $\overset\circ G \subset G$ denote the preimage of $T_\sub \subset \tilde T$. 
Then the preimage of $T_\sub$ in $\tilde M$, which we denote $\overset\circ{\tilde M}$, 
is equal to $\spec k[\overset\circ G]^{U(P^-) \xt U(P)}$.
Observe that $B^-B = U(B^-) \xt T_\sub \xt U(B)$ is an open affine subset contained
in $\overset\circ G$. Let us show that $\overset \circ G = B^-B$. 
By definition, $\overset\circ G$ consists of $g \in G$ such that 
$f_{\check\lambda}(g) \ne 0$ for all dominant weights $\check\lambda$. 
By the Bruhat decomposition, it suffices to show that if $w$ belongs to the
normalizer of $T_\sub$ but not to $T_\sub$ (i.e., $w$ corresponds to a nontrivial element of $W$), 
then there exists $\check\lambda$ with $f_{\check\lambda}(w)=0$. 
Indeed, for a dominant regular weight $\check\lambda$ we have $w\check\lambda \ne \check\lambda$.
Thus the left and right $T$-actions on $w^{-1}f_{\check\lambda}$ do not have the same
weight, which implies that $f_{\check\lambda}(w)=(w^{-1} f_{\check\lambda})(1) = 0$.

Let $B_M = B/U(P) = B \cap M$ and $B_M^- = B^-/U(P^-) = B^- \cap M$. 
From the equality $\overset\circ G = U(B^-)\xt T_\sub \xt U(B)$ we deduce that
$\overset\circ{\tilde M} = U(B^-_M) \xt T_\sub \xt U(B_M)$ is an open dense subset of both $M$ and $\tilde M$.  
Using left (or right) translations by $M$, we deduce that the whole group $M$ is an open 
subset of $\tilde M$.
\end{proof}

The fraction field of $\tilde M$ is contained in\footnote{In fact one can show that the fraction field of $k[G]^{U(P^-) \xt U(P)}$ is equal to $k(G)^{U(P)^- \xt U(P)}$.} the field of invariants $k(G)^{U(P^-) \xt U(P)}$.
Thus normality of $G$ implies normality of $\tilde M$.
Therefore Lemma~\ref{lem:Mopen} implies that $\tilde M$ is a normal reductive monoid with group of 
units $M$.

\begin{proof}[Proof of Theorem~\ref{thm:M}] 
Let $T_\sub$ be a Cartan subgroup of $M \subset G$. 
Since $\tilde M$ is a normal reductive monoid with group of units $M$, it is determined by 
the closure of $T_\sub$ in $\tilde M$, which is the spectrum of
the algebra 
\[ \tilde R := \im(k[G]^{U(P^-) \xt U(P)} \to k[T_\sub]).\]
By unipotence of $U(P^-)$, the algebra $\tilde R$ is the image of 
the restriction map $k[G/U(P)] \to k[T_\sub]$. 
Therefore $\spec \tilde R$ is the closure of $T_\sub$ in $\wbar{G/U(P)}$. 
By the proof of Lemma~\ref{lem:normMbar}, this is also the closure of $T_\sub$ in $\wbar M$.
Since $\tilde M$ and $\wbar M$ are both normal algebraic monoids with unit group $M$,
the classification of normal reductive monoids (\cite[Theorem 5.4]{Renner})
implies that the map $\wbar M \to \tilde M$ is an isomorphism.
\end{proof}

\subsection{Tannakian description of $\wbar M$} \label{sect:Rep(M+)}

Let $\Rep(M)$ denote the monoidal category of finite-dimensional representations of $M$.
Similarly, one has the monoidal category $\Rep(\wbar M)$. Since $M$ is schematically
dense in $\wbar M$, the monoidal functor
\[ \Rep(\wbar M) \to \Rep(M) \]
corresponding to $M \into \wbar M$ is fully faithful. So we can consider 
$\Rep(\wbar M)$ as a full subcategory of $\Rep(M)$.

\subsubsection{} The usual Tannakian formalism describes $\wbar M$ in terms of $\Rep(\wbar M)$.
Namely, for a test scheme $S$, an element of the monoid $\Hom(S,\wbar M)$ is a collection of 
assignments 
\[ \wbar V \in \Rep(\wbar M) \rightsquigarrow m_{\wbar V} \in \End_{\eO_S}(\wbar V \ot \eO_S), \]
compatible with morphisms $\wbar V_1 \to \wbar V_2$ in $\Rep(\wbar M)$ and such that
$m_{\wbar V_1 \ot \wbar V_2} = m_{\wbar V_1} \ot m_{\wbar V_2}$. 
The multiplication in $\Hom(S,\wbar M)$ corresponds to the multiplication in $\End_{\eO_S}(\wbar V \ot \eO_S)$.

Our goal is to prove Proposition~\ref{prop:repM+} below, which describes the subcategory
$\Rep(\wbar M)$.

\subsubsection{Description of $\Rep(\wbar M)$}
Fix a parabolic subgroup $P^-\subset G$ opposite to $P$, 
and identify the Levi subgroup $P \cap P^-$ with $M$. 

For an $M$-module $\wbar V$, 
we consider an element $f \in \ind^G_{P^-}(\wbar V)$ as a regular map 
$G \to \wbar V$ (cf.~\cite[\S I.3.3]{Jantzen}) satisfying $f(g m \bar u) = m^{-1} f(g)$ 
for all $\bar k$-points $g\in G, m \in M, \bar u \in U(P^-)$.
Using this description, evaluation at $1$ in $G$ defines 
an $M$-morphism $\ind^G_{P^-}(\wbar V) \to \wbar V$. 

\begin{lem}\label{lem:Mind}
Let $\wbar V \in \Rep(\wbar M)$. Then evaluation at $1$ induces an isomorphism
\begin{equation} \label{eqn:indres}
	\ind^G_{P^-}(\wbar V)^{U(P)} \to \wbar V.
\end{equation}
\end{lem}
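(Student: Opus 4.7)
The plan is to establish injectivity and surjectivity of the evaluation map separately, by rather different arguments.

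\emph{Injectivity.} I would argue by density of the opposite big cell. Suppose $f \in \ind^G_{P^-}(\wbar V)^{U(P)}$ satisfies $f(1) = 0$. For any $u \in U(P)$ and $p \in P^-$, the defining conditions give
\[ f(up) = p^{-1} f(u) = p^{-1} f(1) = 0. \]
Since $P$ and $P^-$ are opposite, the subset $U(P) \cdot P^-$ is open dense in $G$, so a regular function vanishing on it must be identically zero.

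\emph{Surjectivity.} For a given $v \in \wbar V$ I would construct an explicit preimage. Theorem~\ref{thm:M} identifies $k[\wbar M] = k[G]^{U(P^-) \xt U(P)} \subset k[G]$, which is dual to a regular morphism $\pi \colon G \to \wbar M$; this $\pi$ extends the canonical embedding $M \into \wbar M$, is invariant under left $U(P^-)$- and right $U(P)$-translation, and is $M \xt M$-equivariant for the translation actions on $G$ and the multiplication actions on $\wbar M$. The hypothesis $\wbar V \in \Rep(\wbar M)$ guarantees that the orbit map $\psi_v \colon \wbar M \to \wbar V$, $\bar m \mapsto \bar m \cdot v$, is regular. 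I would then set
\[ f_v(g) := \pi(g^{-1}) \cdot v; \]
the inversion $g \mapsto g^{-1}$ swaps left and right, so $f_v$ is invariant under left translation by $U(P)$. A direct calculation using the $M \xt M$-equivariance of $\pi$, associativity of the $\wbar M$-module structure, and the triviality of $U(P^-)$ on $\wbar V$ verifies $f_v(gp) = p^{-1} f_v(g)$ for $p \in P^-$, and $f_v(1) = \pi(1) \cdot v = v$ is immediate.

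\emph{Main obstacle.} The main subtlety is convention bookkeeping: Theorem~\ref{thm:M} naturally delivers $\pi$ with left $U(P^-)$- and right $U(P)$-invariance, whereas $\ind^G_{P^-}(\wbar V)^{U(P)}$ demands left $U(P)$-invariance and right $P^-$-equivariance. The inversion $g \mapsto g^{-1}$ reconciles this. Crucially, it is precisely the $\wbar M$-module hypothesis (as opposed to a mere $M$-module hypothesis) that makes $\psi_v$ extend regularly from $M$ to all of $\wbar M$, which in turn is what makes $f_v$ a regular function on $G$.
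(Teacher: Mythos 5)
Your proof is correct and follows essentially the same route as the paper: injectivity via density of the big cell $U(P)P^-$, and surjectivity via the identification $k[\wbar M]\cong k[G]^{U(P^-)\xt U(P)}$ of Theorem~\ref{thm:M} together with the $\wbar M$-module structure on $\wbar V$. The only difference is packaging: the paper extends each matrix coefficient $\brac{\xi, m^{-1}v}$ from the big cell, whereas you define the same extension globally as $g \mapsto \pi(g^{-1})\cdot v$, which is a clean and equivalent formulation.
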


\begin{proof}
Since $U(P) P^-$ is a dense open subset of $G$, the map
\eqref{eqn:indres} is injective. 
Let $v \in \wbar V$. Then we can define a morphism 
$f : U(P) \xt M \xt U(P^-) \cong U(P) P^- \to \wbar V$ by
$f(u m \bar u) = m^{-1} v$ for $m \in M,\, u\in U(P),\, \bar u \in U(P^-)$.
For any $\xi \in \wbar V^*$, the pairing $\brac{\xi, f(u m \bar u)} = \brac{\xi, m^{-1} v}$ 
extends to a regular function in $k[G]^{U(P) \xt U(P^-)}$
by Theorem~\ref{thm:M}.
Therefore $f$ extends to a $U(P)$-invariant function in $\ind^G_{P^-}(\wbar V)$,
proving surjectivity of \eqref{eqn:indres}. 
\end{proof}

\begin{prop}\label{prop:repM+} Let $\wbar V \in \Rep(M)$. Then the following are equivalent:

(i) $\wbar V$ belongs to $\Rep(\wbar M)$. 

(ii) The weights of $\wbar V$ lie in $W_M \cdot \check\Lambda^+_G \subset \check\Lambda$.

(iii) There exists $V \in \Rep(G)$ such that $\wbar V \cong V^{U(P)}$. 
\end{prop}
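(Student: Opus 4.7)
I will prove the chain of implications (iii) $\Rightarrow$ (ii) $\Leftrightarrow$ (i) $\Rightarrow$ (iii). The two ``outer'' implications are essentially already in hand. Namely, (iii) $\Rightarrow$ (ii) is a direct invocation of Corollary~\ref{cor:Uinv}, which bounds the weights of $V^{U(P)}$ by $W_M \cdot \check\Lambda^+_G$ for any $V \in \Rep(G)$. For (i) $\Rightarrow$ (iii), I would set $V := \ind^G_{P^-}(\wbar V)$; this lies in $\Rep(G)$ and is finite-dimensional by properness of $G/P^-$, and the required isomorphism $V^{U(P)} \cong \wbar V$ is precisely Lemma~\ref{lem:Mind}.

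The heart of the argument is the equivalence (i) $\Leftrightarrow$ (ii), which amounts to translating the defining condition of $A := k[\wbar M] \subset k[M]$ into weight-theoretic language. By Corollary~\ref{cor:saturated}, $\check C \cap \check\Lambda = W_M \cdot \check\Lambda^+_G$, so the subalgebra $R \subset \bar k[T_\sub]$ of \S\ref{sect:R} is exactly the span of the characters $\check\gamma \in W_M \cdot \check\Lambda^+_G$. An $M$-module $\wbar V$ extends to $\wbar M$ if and only if every matrix coefficient $f_{\xi,v}(m) := \xi(mv)$ lies in $A$, i.e., iff for all $m_1, m_2 \in M(\bar k)$ the function $t \mapsto \xi(m_1 t m_2 v)$ lies in $R$. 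Decomposing $m_2 v = \sum_{\check\gamma} w_{\check\gamma}$ into $T_\sub$-weight vectors of $\wbar V$ gives
\[
\xi(m_1 t m_2 v) \;=\; \sum_{\check\gamma} \check\gamma(t)\, \xi(m_1 w_{\check\gamma}),
\]
so the $T_\sub$-characters appearing are precisely those $\check\gamma$ that occur as weights of $\wbar V$. Thus the matrix coefficient condition holds for all choices of $v, \xi, m_1, m_2$ if and only if every weight of $\wbar V$ lies in $W_M \cdot \check\Lambda^+_G$, yielding (i) $\Leftrightarrow$ (ii).

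No serious obstacle is anticipated: (iii) $\Rightarrow$ (ii) is a one-line invocation, (i) $\Leftrightarrow$ (ii) is the weight computation sketched above, and (i) $\Rightarrow$ (iii) is an immediate application of Lemma~\ref{lem:Mind}. The only mild subtlety is that Lemma~\ref{lem:Mind} is stated for $\wbar V \in \Rep(\wbar M)$, which is why I route (ii) $\Rightarrow$ (iii) through (i) rather than attempting it directly.
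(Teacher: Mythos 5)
Your proposal is correct and follows essentially the same route as the paper: (iii) $\Rightarrow$ (ii) via Corollary~\ref{cor:Uinv}, (i) $\Rightarrow$ (iii) via Lemma~\ref{lem:Mind} with $V = \ind^G_{P^-}(\wbar V)$, and (i) $\Leftrightarrow$ (ii) from the definition of $\wbar M$. The only difference is that you spell out the matrix-coefficient translation of ``$\wbar V$ extends to $\wbar M$'' into the weight condition, which the paper compresses into the phrase that it ``follows from the definition of $\wbar M$''; your elaboration is accurate.
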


\begin{proof}
The equivalence of (i) and (ii) follows from the definition of 
$\wbar M$ in \S\ref{sect:defineMbar}.
Corollary~\ref{cor:Uinv} proves (iii) implies (ii). 
Lemma~\ref{lem:Mind} shows that (i) implies (iii) by setting $V = \ind^G_{P^-}(\wbar V)$,
which is a finite-dimensional $G$-module.
\end{proof}

\begin{rem} Suppose that $k$ is algebraically closed.
One deduces from Lemma~\ref{lem:Mind} that $\nabla(\check\lambda)^{U(P)}$ is
isomorphic to the dual Weyl $M$-module $\nabla_M(\check\lambda)$. 
By \cite[Remark II.2.11]{Jantzen}, the subspace $\nabla(\check\lambda)^{U(P)}$
also equals the sum of the weight spaces of $\nabla(\check\lambda)$ with weights 
$\le_M \check\lambda$. 
Dually, one sees that the sum of the weight spaces of $\Delta(\check\lambda)$
with weights $\le_M \check\lambda$ is isomorphic to $\Delta(\check\lambda)_{U(P^-)}$, 
which is in turn isomorphic to the Weyl $M$-module $\Delta_M(\check\lambda)$.
\end{rem}

\begin{rem}
Let $O$ be a complete discrete valuation ring with field of fractions $K$ and residue field $k$.
By Proposition~\ref{prop:repM+}(iii) and the usual Tannakian formalism, 
one observes that the closed subscheme $\Gr^+_M \subset \Gr_M = M(K)/M(O)$  
defined in \cite[\S 6.2]{BG}, \cite[\S 1.6]{BFGM} is 
equal to the subspace $(\wbar M(O) \cap M(K))/M(O)$.
\end{rem}

\section{Relation to boundary degenerations} \label{sect:XPVin}
Let $P$ and $P^-$ be a pair of opposite parabolic subgroups in $G$. We identify 
the Levi subgroup $P \cap P^-$ with the Levi factor $M= P/U(P)$. 
Let $\wbar M$ be the normal reductive monoid with group of units $M$ 
defined in \S\ref{sect:defineMbar}.

In this section we will show that $\wbar{G/U(P)}$ embeds as a closed subscheme
in the affine closure of the boundary degeneration defined in \cite{BK, SV, Sak}. 
We will also describe the relation between the boundary degeneration and 
the Vinberg semigroup (i.e., enveloping semigroup) of $G$. 
This will give an alternate description of $\wbar M$
as a subscheme of the Vinberg semigroup, using idempotents.

\subsection{Boundary degenerations} \label{sect:X_P}
Define the boundary degeneration 
\[ X_P := (G \xt G)/(P \xt_M P^-) = (G/U(P) \xt G/U(P^-))/(P \cap P^-), \] 
where $P \cap P^-$ acts diagonally on the right.
This is a strongly quasi-affine variety by \cite{Grosshans} and Hilbert's theorem
on reductive groups. 

\begin{rem}\label{rem:X_Pspherical}
The group $G \xt G$ acts on $X_P$ by left translations. Suppose that $k$
is algebraically closed and choose a pair $B,B^-$ of  
opposite Borel subgroups contained in $P,P^-$, respectively.
Then the orbit of $B^- \xt B$ acting on $(1,1) \in X_P$ is a dense open subset. 
Therefore $X_P$ is a spherical variety with respect to $G \xt G$.
\end{rem}

\subsubsection{} Let $\wbar X_P = \spec k[X_P]$.
Since $X_P$ is strongly quasi-affine, $\wbar X_P$ is affine of finite type
and the canonical embedding $X_P \into \wbar X_P$ is open.

Note that $\wbar X_P$ is the affine GIT quotient of $\wbar{G/U(P)} \xt \wbar{G/U(P^-)}$
by the diagonal right $M$-action, but it is \emph{not} the stack quotient.

\subsubsection{}
Consider the map of strongly quasi-affine varieties
\begin{equation}\label{eqn:GsubX} 
	G/U(P) \to X_P : g \mapsto (g, 1). 
\end{equation}
The base change of \eqref{eqn:GsubX} under the smooth cover $G \xt G \to X_P$ 
gives the natural closed embedding $G \xt P^- \into G \xt G$. 
Therefore \eqref{eqn:GsubX} is also a closed embedding.

The composition $G/U(P) \into X_P \into \wbar X_P$ induces a map 
\begin{equation}\label{eqn:GsubXbar}
	\wbar{G/U(P)} \to \wbar X_P.
\end{equation}
In characteristic $0$, one easily deduces from \cite[Proposition 5]{AT} 
that \eqref{eqn:GsubXbar} is a closed embedding. In positive characteristic, 
this is not \emph{a priori} clear, but the following theorem shows it is still true:

\begin{thm} \label{thm:X_P}
	The map \eqref{eqn:GsubXbar} is a closed embedding, and the composition 
\[ \wbar{G/U(P)} \to \wbar X_P \to \spec k[X_P]^{U(P)} \]
is an isomorphism, where $U(P) \subset G$ acts on $X_P$ by left translations in the second
coordinate.
\end{thm}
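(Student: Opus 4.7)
The plan is to prove both assertions by showing the composition $\wbar{G/U(P)} \to \wbar X_P \to \spec k[X_P]^{U(P)}$ is an isomorphism. Granting this, the first map becomes a section of the affine GIT quotient morphism $\wbar X_P \to \spec k[X_P]^{U(P)}$, and any section of a morphism between affine schemes is automatically a closed embedding.

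First I would compute $k[X_P]^{U(P)}$ by invariant theory. Using the semidirect product decomposition $P \xt_M P^- = (U(P) \xt U(P^-)) \rtimes M_{\mathrm{diag}}$ and the fact that the extra left $U(P)$-action on the second factor commutes with the entire right $(P \xt_M P^-)$-action, one obtains
\[ k[X_P]^{U(P)} = (k[G/U(P)] \ot B)^{M}, \qquad B := k[G]^{U(P)_l \xt U(P^-)_r}, \]
where $M$ acts diagonally by right translations. By running the argument of Theorem~\ref{thm:M} with the roles of $P$ and $P^-$ swapped, $B$ is the coordinate ring of a normal reductive monoid with group of units $M$, and the inversion anti-automorphism $\iota^* \colon k[M] \to k[M]$ identifies $B$ with $k[\wbar M]$ as subalgebras of $k[M]$. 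In particular, the right $M$-weights appearing in $B$ lie in $-W_M \cdot \check\Lambda^+_G$.

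The pullback along $g \mapsto (g, 1)$ corresponds algebraically to $F \mapsto F(-, 1)$. I construct the inverse $\Phi \colon h \mapsto \tilde h$ with $\tilde h(g, m) := h(g m^{-1})$ for $(g, m) \in G/U(P) \xt M$; diagonal $M$-invariance is immediate since $\tilde h(gn, mn) = h(g n (mn)^{-1}) = h(g m^{-1})$. The nontrivial step is extending $\tilde h$ to a regular function on $G/U(P) \xt \spec B$: letting $V \subset k[G/U(P)]$ denote the finite-dimensional $M$-submodule generated by $h$ under right translation, Lemma~\ref{lem:weights} places the weights of $V$ in $W_M \cdot \check\Lambda^+_G$, so by Proposition~\ref{prop:repM+}, $V \in \Rep(\wbar M)$. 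The right coaction $V \to V \ot k[M]$ therefore factors through $V \ot k[\wbar M]$, and applying $1 \ot \iota^*$ exhibits $\tilde h$ as an element of $V \ot B \subset k[G/U(P)] \ot B$. That $\Phi$ and restriction are mutually inverse is a direct check: restriction of $\tilde h$ to $m = 1$ recovers $h$, while for $F$ in the target, $M$-invariance yields $F(g, m) = F(g m^{-1}, 1)$ on the dense open $G/U(P) \xt M$, hence everywhere by regularity.

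The main obstacle is justifying the identification $B \cong k[\wbar M]$ via $\iota^*$: one must verify that the Renner cone of $\spec B$ is the negation of the Renner cone of $\wbar M$, so that the right $M$-weights on the two tensor factors pair up correctly under diagonal invariants. This amounts to carrying out the proof of Theorem~\ref{thm:M} symmetrically and carefully tracking how the dominance convention changes under $P \leftrightarrow P^-$.
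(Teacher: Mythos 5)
Your proposal is correct and is essentially the paper's own argument: both identify $k[X_P]^{U(P)}$ with $(k[G/U(P)] \ot k[\wbar M])^M$ (the inversion in the second factor playing the same role as your $\iota^*$ identification of $B$ with $k[\wbar M]$), and both invert evaluation at $1$ by the coaction $h \mapsto \tilde h$, which is exactly the statement that the right $M$-action on $k[G/U(P)]$ extends to $\wbar M$. The only cosmetic difference is that you justify this factoring of the coaction via Lemma~\ref{lem:weights} and Proposition~\ref{prop:repM+}, whereas the paper cites Corollary~\ref{cor:M} (so your worry about Renner cones for $\spec B$ is unnecessary — the identification $B\cong k[\wbar M]$ via inversion is immediate); both routes rest on previously established results, so there is no circularity.
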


\begin{proof}
Observe that $k[X_P]^{U(P)} = (k[G/U(P)] \ot k[G]^{U(P)\xt U(P^-)})^M$ where $M$ acts diagonally
by right translations. Using the inversion operator on $G$ in the second coordinate, 
we get $k[X_P]^{U(P)} \cong (k[G/U(P)] \ot k[\wbar M])^M$ where $k[\wbar M] = k[G]^{U(P^-) \xt U(P)}$
by Theorem~\ref{thm:M} and $M$ acts anti-diagonally on the right. 
Since $M$ is dense in $\wbar M$, the evaluation at $1 \in \wbar M$ gives
an injection $(k[G/U(P)] \ot k[\wbar M])^M \into k[G/U(P)]$.
On the other hand, $\wbar M$ is the closure of $M$ in $\wbar{G/U(P)}$ by 
Corollary~\ref{cor:M}. The right action of $M$ on $G/U(P)$ therefore extends to a right action 
of $\wbar M$ on $\wbar{G/U(P)}$, which corresponds to a comodule map 
$k[G/U(P)] \to (k[G/U(P)] \ot k[\wbar M])^M$.
The composition 
\[ k[G/U(P)] \to (k[G/U(P)] \ot k[\wbar M])^M \into k[G/U(P)] \]
is the identity, which proves that the composition $\wbar{G/U(P)} \to \spec k[X_P]^{U(P)}$
is an isomorphism. 
It follows that the affine map \eqref{eqn:GsubXbar} is a closed embedding.
\end{proof}

\begin{cor}\label{cor:MX_P}
Consider the embedding $M \into X_P$ defined as the composition of the embeddings 
\eqref{eqn:PsubG} and \eqref{eqn:GsubX}. The
closure of $M$ in $\wbar X_P$ is isomorphic to $\wbar M$. 
The composition 
\[ \wbar M \to \wbar X_P \to \spec k[X_P]^{U(P^-) \xt U(P)} \]
is an isomorphism, where $U(P^-) \xt U(P) \subset G \xt G$ acts on $X_P$ by left translations.
\end{cor}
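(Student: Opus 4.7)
The plan is to deduce Corollary~\ref{cor:MX_P} directly from Theorem~\ref{thm:X_P}, Theorem~\ref{thm:M}, and Corollary~\ref{cor:M}, without any further computation. First I would factor the composite $M \into \wbar X_P$ appearing in the statement as
\[ M \into \wbar{G/U(P)} \into \wbar X_P, \]
i.e., the embedding \eqref{eqn:PsubG} followed by \eqref{eqn:GsubXbar}. Since \eqref{eqn:GsubXbar} is a closed embedding by Theorem~\ref{thm:X_P}, the closure of $M$ in $\wbar X_P$ coincides with its closure in $\wbar{G/U(P)}$, which is $\wbar M$ by Corollary~\ref{cor:M}. This handles the first assertion.

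For the second assertion I would compute $k[X_P]^{U(P^-)\xt U(P)}$ in two stages. Taking $U(P)$-invariants with respect to the left action on the second coordinate first, Theorem~\ref{thm:X_P} identifies $k[X_P]^{U(P)}$ with $k[G/U(P)]$, and under this identification the residual left action of $U(P^-)$ on the first coordinate of $X_P$ becomes left translation by $U(P^-)\subset G$ on $G/U(P)$. Taking $U(P^-)$-invariants then yields
\[ k[X_P]^{U(P^-)\xt U(P)} \;\cong\; k[G/U(P)]^{U(P^-)} \;=\; k[G]^{U(P^-) \xt U(P)}, \]
which is $k[\wbar M]$ by Theorem~\ref{thm:M}. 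Geometrically, the resulting map $\wbar M \to \spec k[X_P]^{U(P^-) \xt U(P)}$ then factors as the composition of the closed embedding $\wbar M \into \wbar{G/U(P)}$ from Corollary~\ref{cor:M} with the affine GIT quotient $\wbar{G/U(P)} \to \spec k[G/U(P)]^{U(P^-)}\cong \wbar M$, and the latter is a retraction of the former by Theorem~\ref{thm:M}. So the composite is the identity, and in particular an isomorphism.

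No essential obstacle remains; the only point that requires slight care is verifying that the algebraic isomorphisms produced by the two applications of Theorem~\ref{thm:X_P} and Theorem~\ref{thm:M} are compatible with the geometric maps appearing in the statement, and this is immediate from tracing the embedding $M \into X_P$ through the intermediate space $\wbar{G/U(P)}$.
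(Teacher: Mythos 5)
Your argument is correct and is essentially the paper's own proof, which simply says to combine Theorems~\ref{thm:M} and~\ref{thm:X_P}: you have just made explicit the factorization through $\wbar{G/U(P)}$, the two-stage passage to $U(P)$- and then $U(P^-)$-invariants, and the equivariance needed to identify the composites, all of which is implicit in the paper's one-line proof.
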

\begin{proof}
	Combine Theorems~\ref{thm:M} and \ref{thm:X_P}.
\end{proof}

\subsection{Relation to Vinberg's semigroup} Recall that $k$ is an arbitrary perfect field.

\subsubsection{}
We first give a brief review of the standard material on the Vinberg semigroup, which 
is contained in 
\cite{Vinberg,Ritt,Renner}. 

Let $Z(G)$ denote the center of $G$. Consider the group 
\[ G_\enh := (G \xt T)/Z(G),\] 
where $Z(G)$ maps to $G \xt T$ anti-diagonally. Note that $Z(G_\enh) = T$.

\smallskip

The Vinberg semigroup of $G$, denoted $\wbar{G_\enh}$, is a normal reductive
$k$-monoid with group of units $G_\enh$. 
The Renner cone of $\wbar{G_\enh}$ is by definition
\begin{equation} \label{eqn:RenVin}
	\{ (\check\lambda_1, \check\lambda_2) \in \check\Lambda^\bbQ \xt \check\Lambda^\bbQ 
	\mid \check\lambda_2 - w \check\lambda_1 \in \check\Lambda^{\pos,\bbQ}_G \text{ for all } w \in W \},
\end{equation}
where $\check\Lambda^{\pos,\bbQ}_G$ is the rational polyhedral cone generated by
the positive roots of $G$. 
The Vinberg semigroup may be constructed from the Renner cone 
as described in \S\ref{sect:recall}.

The canonical homomorphism of algebraic groups $G_\enh \to T_\adj:=T/Z(G)$ extends to 
a homomorphism of algebraic monoids
\[ \bar \pi : \wbar{G_\enh} \to \wbar{T_\adj}, \] 
where $\wbar{T_\adj} := \mf{t}_\adj$ is the Cartan Lie algebra of the adjoint group.
Let $\overset\circ{\wbar{G_\enh}}$ denote the non-degenerate locus of $\wbar{G_\enh}$.
It is known that $\overset\circ{\wbar{G_\enh}}$ is smooth over $\wbar{T_\adj}$.

\subsubsection{}
For a parabolic $P$ with Levi factor $M$, let $\mathbf c_P \in \wbar{T_\adj}$ be the point
defined by the condition that $\check\alpha_i(\mathbf c_P) = 1$ for simple roots $\check\alpha_i,\, i\in \Gamma_M$, and $\check\alpha_j(\mathbf c_P)=0$ for all other simple roots. 
Note that $\mathbf c_P$ is an idempotent with respect to the monoid structure
on $\wbar{T_\adj}$.

Let $\wbar{G_\enh}_{,\mathbf c_P}$ denote the fiber of $\bar \pi$ over $\mathbf c_P$. 
Note that by definition of $\mathbf c_P$, the center $Z(M)$ is the stabilizer of $T$ acting 
on $\mathbf c_P$ in $\wbar{T_\adj}$.

\subsubsection{}

Fix a pair of opposite parabolic subgroups $P$ and $P^-$, and identify $M$ with 
the Levi subgroup $P \cap P^-$. Since conjugation by $M$ fixes $Z(M)$, the center of $M$
can be embedded as a subgroup of the abstract Cartan $T$. 
Consider the anti-diagonal map  
\[ \fs:	Z(M)/Z(G) \to (Z(M) \xt T)/Z(G) \into (G \xt T)/Z(G)= G_\enh 
\]
defined by $\fs(t) = (t^{-1},t)$.
Observe that $Z(M)/Z(G) \subset T/Z(G) = T_\adj$ coincides with the subtorus 
$\{ t \in T_\adj \mid \check\alpha_i(t) = 1,\, i \in \Gamma_M \}$. 
Let $\wbar{Z(M)/Z(G)}$ denote the closure of $Z(M)/Z(G)$ in $\wbar{T_\adj}$. 

\begin{lem}
(i) The map $\fs$ extends to a homomorphism 
\[ \bar\fs : \wbar{Z(M)/Z(G)}\to \wbar{G_\enh} \]
of algebraic monoids. 

(ii) The composition $\bar\pi \circ \bar\fs$ is the natural inclusion $\wbar{Z(M)/Z(G)} \into \wbar{T_\adj}$.
\end{lem}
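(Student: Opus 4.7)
The plan is to reduce both parts to a computation at the level of the abstract Cartan, using the explicit description of the Renner cone \eqref{eqn:RenVin}. For part (i), I first observe that $\fs$ factors through the Cartan $(T \xt T)/Z(G) \subset G_\enh$, as it is the composition of the anti-diagonal embedding $Z(M)/Z(G) \into (T \xt T)/Z(G)$ with the natural inclusion of this Cartan. By the construction of $\wbar{G_\enh}$ from its Renner cone in \S\ref{sect:recall}, the closure $\wbar{(T \xt T)/Z(G)}$ of the Cartan in $\wbar{G_\enh}$ is a closed affine subscheme whose coordinate ring has a basis consisting of those characters $(\check\lambda_1, \check\lambda_2)$ of $(T \xt T)/Z(G)$ that lie in \eqref{eqn:RenVin}. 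Thus it suffices to extend $\fs$ to a morphism of toric closures $\wbar{Z(M)/Z(G)} \to \wbar{(T \xt T)/Z(G)}$, and then postcompose with the closed embedding $\wbar{(T \xt T)/Z(G)} \into \wbar{G_\enh}$.

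Next, I would verify this toric extension by a direct check on character lattices. The pullback induced by $\fs$ sends a character $(\check\lambda_1, \check\lambda_2)$ of $(T \xt T)/Z(G)$ to the restriction of $\check\lambda_2 - \check\lambda_1$ to $Z(M)/Z(G)$. Specializing $w = 1$ in \eqref{eqn:RenVin} gives $\check\lambda_2 - \check\lambda_1 \in \check\Lambda^{\pos,\bbQ}_G$, and since $Z(M)/Z(G) \subset T_\adj$ is cut out by $\check\alpha_i = 1$ for $i \in \Gamma_M$, this restriction is a non-negative $\bbQ$-linear combination of the characters $\check\alpha_j|_{Z(M)/Z(G)}$ for $j \in \Gamma_G \setminus \Gamma_M$. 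This is precisely the defining condition for characters of $Z(M)/Z(G)$ that extend to the toric closure $\wbar{Z(M)/Z(G)} \subset \wbar{T_\adj}$, so the desired extension $\bar\fs$ exists uniquely. Compatibility with the monoid structures is then automatic, since $\fs$ is a group homomorphism, $Z(M)/Z(G)$ is schematically dense in $\wbar{Z(M)/Z(G)}$, and the target monoid $\wbar{G_\enh}$ is separated.

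For part (ii), both $\bar\pi \circ \bar\fs$ and the natural closed embedding $\wbar{Z(M)/Z(G)} \into \wbar{T_\adj}$ are morphisms extending $\pi \circ \fs : Z(M)/Z(G) \into T_\adj$, which is the natural inclusion by construction. By density of $Z(M)/Z(G)$ in $\wbar{Z(M)/Z(G)}$ and separatedness of $\wbar{T_\adj}$, these two extensions must coincide. I do not anticipate serious obstacles in this plan, as the essential step---the inequality $\check\lambda_2 - \check\lambda_1 \in \check\Lambda^{\pos,\bbQ}_G$---is already visible by specializing $w = 1$ in \eqref{eqn:RenVin}; the rest is bookkeeping about the restriction to the subtorus $Z(M)/Z(G)$.
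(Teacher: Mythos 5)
Your proposal is correct and takes essentially the same route as the paper: reduce to the closure of the Cartan and check on character lattices that the pullback under $\fs$ of a Renner-cone weight $(\check\lambda_1,\check\lambda_2)$, namely the restriction of $\check\lambda_2-\check\lambda_1$ (which lies in $\check\Lambda^{\pos,\bbQ}_G$ by taking $w=1$), lands in the span of the $\check\alpha_j$, $j\in\Gamma_G\setminus\Gamma_M$, with (ii) then following by density. The only cosmetic differences are that the paper first passes to $\bar k$ and fixes a Cartan subgroup $T_\sub\subset M$ to realize the Cartan of $G_\enh$ inside the group, and that your non-negative rational coefficients upgrade to non-negative integral ones because the $\check\alpha_j$ restricted to $Z(M)/Z(G)$ form a $\bbZ$-basis of its character lattice, a point your argument uses implicitly.
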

\begin{proof} Since we know the composition $\pi \circ \fs$, it suffices 
	to prove statement (i). 
We may assume that $k$ is algebraically closed. 

The weight lattice of $Z(M)/Z(G)$ is the free abelian group $\check\Lambda_{Z(M)/Z(G)}$ 
with basis consisting of the simple roots $\check\alpha_j$ for $j \in \Gamma_G \setminus \Gamma_M$. 
If $\check\lambda = \sum_{i\in \Gamma_G} n_i \check \alpha_i$ for $n_i \in \bbZ$, 
let $\pr(\check\lambda) := \sum_{j\notin \Gamma_M} n_j \check\alpha_j$. 
Let $\check C$ denote the Renner cone \eqref{eqn:RenVin} of $\wbar{G_\enh}$,
and let $\check C_\bbZ := \check C \cap (\check \Lambda \xt \check\Lambda)$. 
Fix a Cartan subgroup $T_\sub \subset M$ and identify $T_\sub$ with $T$ by 
choosing a Borel. The map $\fs$ lands in $(T_\sub\xt T)/Z(G)$, so 
we have an induced map of weights (restricted to $\check C_\bbZ$): 
\[ \check C_\bbZ \to \check\Lambda_{Z(M)/Z(G)} : (\check\lambda_1,\check\lambda_2)\mapsto 
\pr(\check\lambda_2 - \check\lambda_1).\] 
The image of this map is the non-negative span of the simple roots $\check\alpha_j,
j\notin\Gamma_M$. Statement (i) follows. 
\end{proof}

\begin{rem}
If $k$ is algebraically closed, then the map $\bar\fs$ we have constructed
factors through the section $\wbar{T_\adj} \to \oo{\wbar{G_\enh}}$ constructed
in \cite[Lemma D.5.2]{DG:CT}, which depends on a choice of Borel subgroup and maximal torus
of $G$. 
In particular, $\bar\fs$ always lands in the non-degenerate locus of the Vinberg
semigroup for arbitrary $k$.

\end{rem}

\subsubsection{The idempotent $e_P$}
Observe that $\mathbf c_P$ lies in the submonoid $\wbar{Z(M)/Z(G)} \subset \wbar{T_\adj}$. 
Define the idempotent 
\[ e_P := \bar\fs(\mathbf c_P) \in \oo{\wbar{G_\enh}}(k),\] 
which satisfies $\bar\pi(e_P) =\mathbf c_P$. 
In \cite[Appendix C]{DG:CT},
it is shown (by passing to an algebraic closure $\bar k$) that 
\[ P = \{ g \in G \mid g \cdot e_P = e_P\cdot g\cdot e_P \} \quad\text{and}\quad 
	P^- = \{ g \in G \mid e_P\cdot g = e_P\cdot g\cdot e_P \}, \]
and the stabilizer of the $P \xt P^-$ action on $e_P$ equals $P \xt_M P^-$. 
Note that if $g \in P \cap P^-$, then $g \cdot e_P = e_P \cdot g \cdot e_P = e_P \cdot g$.
It follows that $M$ is the centralizer of $e_P$ in $G$. 

\begin{rem} \label{rem:GeG}
	It is known that $G \cdot e_P \cdot G$ is equal to the
non-degenerate locus $\oo{\wbar{G_\enh}}_{,\mathbf c_P}$ of the fiber. 
One deduces from the above that the $G \xt G$-action on $e_P$ induces
an isomorphism\footnote{In fact, we learned from S.~Schieder that this induces
an isomorphism of affine varieties $\wbar X_P \cong \wbar{G_\enh}_{,\mathbf c_P}$.}
\[ X_P := (G \xt G)/(P \xt_M P^-) \cong \oo{\wbar{G_\enh}}_{,\mathbf c_P}.\]
\end{rem}

\begin{rem} 
Suppose that $k$ is algebraically closed.
By a result of M.~Putcha (cf.~\cite[Theorem 4.5]{Renner}) for general reductive monoids, any 
idempotent in the non-degenerate locus of $\wbar{G_\enh}$ is $G(k)$-conjugate to
$e_P$ for some parabolic $P$. Moreover, the choice of $P$ and $P^-$ determines this
idempotent in its conjugacy class. 
\end{rem}

\subsubsection{Relating $\wbar M$ to the Vinberg semigroup}
Consider the map 
\begin{equation}  \label{eqn:ege}
	G \to \wbar{G_\enh}_{,\mathbf c_P} : g \mapsto e_P \cdot g \cdot e_P.
\end{equation}
Since $U(P) \cdot e_P = e_P \cdot U(P^-) = \{e_P\}$, this map is $U(P^-) \xt U(P)$-invariant. 
By Theorem~\ref{thm:M}, we have an isomorphism $\wbar M \cong \spec k[G]^{U(P^-) \xt U(P)}$. 
Since $\wbar{G_\enh}_{,\mathbf c_P}$ is affine, the map \eqref{eqn:ege}
must factor through a map
\begin{equation} \label{eqn:Vin}
	\wbar M \to e_P \cdot \wbar{G_\enh}_{,\mathbf c_P} \cdot e_P. 
\end{equation}
Observe that $e_P \cdot \wbar{G_\enh}_{,\mathbf c_P} \cdot e_P$ is an irreducible
algebraic monoid with identity $e_P$. 
The map \eqref{eqn:Vin} is an extension of the homomorphism 
of algebraic monoids $M \to \wbar{G_\enh}_{,\mathbf c_P}$ sending 
$m\mapsto m \cdot e_P = e_P \cdot m$. 
Therefore \eqref{eqn:Vin} must also be a homomorphism of algebraic monoids.

\begin{thm}\label{thm:Vin}
	The homomorphism \eqref{eqn:Vin} is an isomorphism of algebraic monoids. 
\end{thm}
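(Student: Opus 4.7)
The plan is to reduce the theorem to Corollary~\ref{cor:MX_P} by combining it with the $G \xt G$-equivariant isomorphism $X_P \cong \oo{\wbar{G_\enh}}_{,\mathbf c_P}$ from Remark~\ref{rem:GeG}. First I would extend that isomorphism to an identification of affine varieties $\wbar X_P \cong \wbar{G_\enh}_{,\mathbf c_P}$: on the open orbit the map is $(g_1, g_2) \mapsto g_1 \cdot e_P \cdot g_2^{-1}$, which is an isomorphism by the stabilizer description recalled in Remark~\ref{rem:GeG}, and the passage to affine closures is a matter of verifying that both sides are the affine hull of this common open subvariety. Under this identification, the $U(P^-) \xt U(P)$-action on $X_P$ by left translations on the two factors becomes the action $(v, u) \cdot x = v \cdot x \cdot u^{-1}$ on $\wbar{G_\enh}_{,\mathbf c_P}$.

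Next I would consider the retraction $r : \wbar{G_\enh}_{,\mathbf c_P} \to e_P \cdot \wbar{G_\enh}_{,\mathbf c_P} \cdot e_P$ sending $x \mapsto e_P \cdot x \cdot e_P$; the target is the closed subscheme cut out scheme-theoretically by $e_P \cdot x = x = x \cdot e_P$, and $r$ is a retraction because $e_P$ is idempotent. The relations $e_P \cdot U(P^-) = \{e_P\} = U(P) \cdot e_P$ imply that $r$ is invariant under the $U(P^-) \xt U(P)$-action above, hence factors through the affine GIT quotient $\spec k[\wbar{G_\enh}_{,\mathbf c_P}]^{U(P^-) \xt U(P)}$. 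Combining the first step with Corollary~\ref{cor:MX_P}, this GIT quotient is identified with $\wbar M$, yielding a morphism $\wbar M \to e_P \cdot \wbar{G_\enh}_{,\mathbf c_P} \cdot e_P$ that coincides with~\eqref{eqn:Vin} by construction.

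To conclude, I would show that both sides are normal reductive monoids with group of units $M$ and identical Renner cones, then invoke Renner's classification \cite[Theorem 5.4]{Renner}. Normality of $\wbar M$ is built into its construction, while the unit group of $e_P \cdot \wbar{G_\enh}_{,\mathbf c_P} \cdot e_P$ is $M \cdot e_P$ because $M$ is the centralizer of $e_P$; the common Renner cone $\check C$ can be computed from the Vinberg cone~\eqref{eqn:RenVin} together with the defining equations $\check\alpha_j(\mathbf c_P) = 0$ of $\mathbf c_P$. The main obstacle is establishing normality of $e_P \cdot \wbar{G_\enh}_{,\mathbf c_P} \cdot e_P$; one could either invoke general facts on closed submonoids cut out by idempotents in normal reductive monoids, or bypass classification altogether by using the Bruhat decomposition $G = U(P^-) \cdot M \cdot U(P)$ and the computation $e_P \cdot (v m u) \cdot e_P = m \cdot e_P$ to show directly that the algebra map $k[e_P \cdot \wbar{G_\enh}_{,\mathbf c_P} \cdot e_P] \to k[\wbar M]$ is surjective, with injectivity being immediate from $r$ being a retraction.
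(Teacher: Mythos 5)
Your overall strategy in the last paragraph (both sides are normal reductive monoids with unit group $M$, compare Renner cones, apply \cite[Theorem 5.4]{Renner}) is the same as the paper's, but the two points you leave open are exactly the ones carrying the weight of the proof. First, normality of $e_P \cdot \wbar{G_\enh}_{,\mathbf c_P} \cdot e_P$: there is no off-the-shelf ``general fact'' invoked in the paper; the argument is that $e_P \cdot \wbar{G_\enh} \cdot e_P$ is a scheme-theoretic retract of the normal variety $\wbar{G_\enh}$ via $x \mapsto e_P x e_P$ (retracts of normal integral affine schemes are normal), and one then has to pass from the whole monoid to the fiber over $\mathbf c_P$ using the $T$-action, i.e.\ the fibration \eqref{eqn:fibration} --- the fiber itself is not a retract of $\wbar{G_\enh}$, so this step cannot be skipped. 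Second, your justification that the unit group is $M \cdot e_P$ (``because $M$ is the centralizer of $e_P$'') only shows $M\cdot e_P$ is contained in the units; to conclude equality one needs $M \cdot e_P$ dense in $e_P \cdot \wbar{G_\enh}_{,\mathbf c_P} \cdot e_P$, which the paper gets from the density of $G \cdot e_P \cdot G$ in the fiber (Remark~\ref{rem:GeG}) after multiplying by $e_P$. The Renner-cone comparison is also only gestured at; the actual content is the weight computation \eqref{eqn:lattice} showing the image of the Vinberg cone under $(\check\lambda_1,\check\lambda_2)\mapsto(\check\lambda_2-\check\lambda_1)(\mathbf c_P)\cdot\check\lambda_1$ is exactly $W_M\cdot\check\Lambda^+_G$.

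Your first two paragraphs also introduce an unnecessary and unproven dependency: the identification $\wbar X_P \cong \wbar{G_\enh}_{,\mathbf c_P}$ is precisely the statement the paper relegates to a footnote as due to Schieder, and it is not a routine ``affine hull'' check --- one must know that regular functions on $\oo{\wbar{G_\enh}}_{,\mathbf c_P}$ extend to the whole fiber, which requires normality of the fiber and a codimension estimate on its boundary, none of which is established here. The paper avoids this entirely by constructing \eqref{eqn:Vin} directly from Theorem~\ref{thm:M}, since $g\mapsto e_P\cdot g\cdot e_P$ is visibly $U(P^-)\xt U(P)$-invariant. Finally, your proposed ``bypass'' of Renner's classification does not work as stated: $G = U(P^-)\cdot M\cdot U(P)$ is false (this is only the dense big cell $U(P^-)MU(P)$), and even granting density, the computation $e_P\cdot(vmu)\cdot e_P = m\cdot e_P$ only determines the pullback of a function on $e_P \cdot \wbar{G_\enh}_{,\mathbf c_P} \cdot e_P$; surjectivity of $k[e_P \cdot \wbar{G_\enh}_{,\mathbf c_P} \cdot e_P] \to k[\wbar M]$ amounts to extending an arbitrary $U(P^-)\xt U(P)$-invariant function on $G$ across the boundary of $M\cdot e_P$ in the Vinberg fiber, which is essentially the theorem itself --- so that route is circular without an independent description of the coordinate ring, which is what the cone computation supplies.
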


By the definition of \eqref{eqn:ege}, we see that the image of \eqref{eqn:Vin}
contains $e_P \cdot G \cdot e_P$. Since the latter map is a homomorphism of monoids, 
we deduce that the image contains $e_P \cdot G \cdot e_P \cdot G \cdot e_P$. 
By Remark~\ref{rem:GeG}, we have $G \cdot e_P \cdot G = \oo{\wbar{G_\enh}}_{,\mathbf c_P}$
is dense in $\wbar{G_\enh}_{,\mathbf c_P}$. Multiplying on the left and right
by $e_P$, we deduce that \eqref{eqn:Vin} has dense image.
On the other hand, the restriction of \eqref{eqn:Vin} to $M$ is injective. 
It follows that $M\cdot e_P$ is a dense sub\emph{group} of $e_P \cdot \wbar{G_\enh}_{,\mathbf c_P}
\cdot e_P$. Therefore $M\cdot e_P$ must be equal to the group of units of 
$e_P \cdot \wbar{G_\enh}_{,\mathbf c_P} \cdot e_P$. 

\medskip

We show that the monoid $e_P \cdot \wbar{G_\enh}_{,\mathbf c_P} \cdot e_P$ is normal and 
then use Renner's classification of normal monoids to prove the theorem. 

Consider the larger algebraic monoid
$e_P \cdot \wbar{G_\enh} \cdot e_P$ with unit $e_P$ (where we do not restrict to a fiber). 
The action of $Z(G_\enh) = T$ on $e_P \cdot \wbar{G_\enh} \cdot e_P$ induces an 
isomorphism
\begin{equation}\label{eqn:fibration}
	((e_P \cdot \wbar{G_\enh}_{,\mathbf c_P} \cdot e_P) \xt T)/Z(M) \cong 
e_P \cdot \wbar{G_\enh} \cdot e_P,
\end{equation}
so the two aforementioned monoids are closely related.

Since $e_P \cdot \wbar{G_\enh} \cdot e_P$ is the closed subscheme of 
the Vinberg semigroup fixed by left and right multiplications by $e_P$, it is a retract
of $\wbar{G_\enh}$ in the category of schemes. The retraction is 
given by the formula $x \mapsto e_P \cdot x \cdot e_P$.

\begin{lem}
Let $Y$ and $S$ be integral affine schemes such that $Y$ is a retract of $S$ (i.e., 
there exist maps $Y \to S$ and $S \to Y$ such that their composition is the
identity map on $Y$). If $S$ is normal then so is $Y$.
\end{lem}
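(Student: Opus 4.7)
The plan is to translate the statement into commutative algebra and exploit the retraction at the level of coordinate rings. Set $A = k[S]$ and $B = k[Y]$; both are integral domains by hypothesis. The retraction consists of morphisms $i : Y \to S$ and $r : S \to Y$ with $r \circ i = \mathrm{id}_Y$, which dualize to ring homomorphisms $r^* : B \to A$ and $i^* : A \to B$ satisfying $i^* \circ r^* = \mathrm{id}_B$. In particular $r^*$ has a left inverse and is therefore injective, so it extends to an injective homomorphism between the fraction fields of $B$ and $A$.

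To check normality of $B$, I would take $x$ in the fraction field of $B$ that is integral over $B$, write $x = a/b$ with $a, b \in B$ and $b \neq 0$, and clear denominators to obtain an equation
\[ a^n + c_{n-1} b \, a^{n-1} + \cdots + c_0 b^n = 0 \]
in $B$, with $c_i \in B$. Applying $r^*$ shows that $y := r^*(a)/r^*(b)$, regarded as an element of the fraction field of $A$ (well-defined since $r^*(b) \neq 0$), satisfies the monic polynomial obtained by applying $r^*$ to the original integral equation. Hence $y$ is integral over $r^*(B) \subseteq A$, and a fortiori over $A$. Normality of $A$ then forces $y \in A$, i.e., $r^*(a) = r^*(b) \cdot y$ inside $A$. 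Applying $i^*$ to this identity and using $i^* \circ r^* = \mathrm{id}_B$ yields $a = b \cdot i^*(y)$ in $B$, so $x = i^*(y) \in B$.

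I do not anticipate any substantive obstacle: the entire content is a short diagram chase, and the only structural input is that $r^* : B \to A$ is a split monomorphism of integral domains whose target is normal. This automatically transports integral dependence from $B$ upward into $A$, where normality can be used, and the section $i^*$ then pushes the result back into $B$.
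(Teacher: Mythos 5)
Your argument is correct and is essentially the paper's proof: both exploit that $r^*\colon k[Y]\to k[S]$ is a split injection, so an element of $\operatorname{Frac}(k[Y])$ integral over $k[Y]$ becomes integral over the normal ring $k[S]$ and hence lies in it, after which the section $i^*$ (a $k[Y]$-algebra retraction) returns it to $k[Y]$. The paper merely packages the same computation globally, factoring $k[Y]\hookrightarrow k[\tilde Y]\hookrightarrow k[S]$ through the normalization and noting via clearing denominators that the retraction is the identity on $k[\tilde Y]$, whereas you run it element by element; no gap either way.
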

\begin{proof}
Since $Y$ is a retract of $S$, we have an inclusion of algebras $k[Y] \into k[S]$. 
The algebra $k[S]$ is integrally closed, so if $\tilde Y$ denotes the
normalization of $Y$ in its field of fractions, then the previous inclusion factors 
as $k[Y] \into k[\tilde Y] \into k[S]$. 
On the other hand the map $Y \to S$ induces an algebra map $k[S] \to k[Y]$
which restricts to the identity on $k[Y]$. Localization implies that
the composition $k[\tilde Y] \to k[Y]$ is injective and hence an isomorphism.
\end{proof}

\begin{cor} \label{cor:ePnorm}
The algebraic monoid $e_P \cdot \wbar{G_\enh} \cdot e_P$ is normal.
\end{cor}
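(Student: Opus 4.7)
The plan is to invoke the preceding lemma with $S = \wbar{G_\enh}$ and $Y = e_P \cdot \wbar{G_\enh} \cdot e_P$. The Vinberg semigroup $\wbar{G_\enh}$ is, by construction in \S\ref{sect:recall}, a normal reductive monoid; in particular it is an integral affine scheme. The paragraph immediately preceding the lemma has already exhibited $Y$ as a closed subscheme of $\wbar{G_\enh}$ together with a retraction $r : \wbar{G_\enh} \to Y$ given by $x \mapsto e_P \cdot x \cdot e_P$. So most of the hypotheses of the lemma are already in place.

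The one point that still needs checking is that $Y$ itself is integral, since this is required in the lemma's statement. This is where the idempotence $e_P^2 = e_P$ enters: viewing $r$ as an endomorphism of $\wbar{G_\enh}$, one has $r \circ r = r$, so the induced ring endomorphism $r^\ast$ of $k[\wbar{G_\enh}]$ is idempotent. A standard check shows that the closed subscheme of $\wbar{G_\enh}$ fixed by $r$ has coordinate ring $r^\ast(k[\wbar{G_\enh}])$, which is a subring of the domain $k[\wbar{G_\enh}]$ and therefore itself a domain. Hence $Y$ is integral and affine.

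With all hypotheses of the lemma verified, the corollary follows directly. The main obstacle is really just the bookkeeping in the previous paragraph, namely confirming that the set-theoretic description ``$e_P \cdot \wbar{G_\enh} \cdot e_P$'' carries a natural integral closed subscheme structure making the retraction a retraction in the scheme-theoretic sense demanded by the lemma; everything else is an immediate application.
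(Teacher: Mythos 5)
Your proof is correct and follows essentially the same route as the paper: both apply the preceding retract lemma with $S = \wbar{G_\enh}$ and $Y = e_P \cdot \wbar{G_\enh} \cdot e_P$, using normality of the Vinberg semigroup and the retraction $x \mapsto e_P \cdot x \cdot e_P$. Your extra verification that $Y$ is integral (via the idempotent ring endomorphism $r^\ast$, whose image $r^\ast(k[\wbar{G_\enh}]) \cong k[Y]$ is a subring of a domain) is a correct piece of bookkeeping that the paper leaves implicit.
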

\begin{proof}
	The Vinberg semigroup is normal by definition, and 
we have observed that $e_P \cdot \wbar{G_\enh} \cdot e_P$ is a retract of $\wbar{G_\enh}$.
\end{proof}

\begin{cor} \label{cor:ePnormf}
	The algebraic monoid $e_P \cdot \wbar{G_\enh}_{,\mathbf c_P} \cdot e_P$ is normal.
\end{cor}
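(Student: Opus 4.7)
By Corollary~\ref{cor:ePnorm}, the monoid $Z := e_P \cdot \wbar{G_\enh} \cdot e_P$ is normal. The plan is to deduce normality of $Y := e_P \cdot \wbar{G_\enh}_{,\mathbf c_P} \cdot e_P$ from this via the fibration isomorphism \eqref{eqn:fibration}, which identifies $Z \cong (Y \xt T)/Z(M)$. Here the action of $Z(M)$ on $Y \xt T$ is free through translation on the $T$-factor (acting on the $Y$-factor through the inclusion $Z(M) \subset T = Z(G_\enh)$), so the quotient map $q : Y \xt T \to Z$ is an fppf $Z(M)$-torsor.

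To transfer normality from $Z$ to $Y$, I would use the second projection $Y \xt T \to T$ composed with $T \to T/Z(M)$: this is $Z(M)$-invariant and descends to a morphism $\phi : Z \to T/Z(M)$. Under $\phi$, the monoid $Z$ is presented as the associated bundle $Y \xt^{Z(M)} T$ over $T/Z(M)$ with fiber $Y$, arising from the torsor $T \to T/Z(M)$. After base change to $\bar k$ and applying Hilbert~90 to the torus part of $Z(M)$, I would find a Zariski-open $U \subset T/Z(M)$ trivializing the torsor, so that $\phi^{-1}(U) \cong Y \xt U$. Since $Z$ is normal, this open subscheme is normal; combined with smoothness of $U$ inside the multiplicative-type group $T/Z(M)$, this forces $Y$ to be normal, as normality descends through the smooth projection $Y \xt U \to Y$.

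The principal obstacle is handling non-smooth components of $Z(M)$ in positive characteristic (such as $\mu_p$ factors), where Zariski-local triviality of the $Z(M)$-torsor may fail. I would bypass this by factoring $T \to T/Z(M)$ as $T \to T/Z(M)^{\circ} \to T/Z(M)$: the first map is a torsor under the torus $Z(M)^{\circ}$ and hence Zariski-locally trivial by Hilbert~90, while the second is a finite flat morphism of groups of multiplicative type through which normality descends by standard fpqc descent. Running the bundle argument through the smooth intermediate quotient $T/Z(M)^{\circ}$ and then descending along the finite flat cover yields normality of $Y$ as required.
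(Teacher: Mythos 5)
Your overall route is the same as the paper's: combine the isomorphism \eqref{eqn:fibration} with Corollary~\ref{cor:ePnorm} and transfer normality from $Z := e_P \cdot \wbar{G_\enh} \cdot e_P$ to $Y := e_P \cdot \wbar{G_\enh}_{,\mathbf c_P} \cdot e_P$ along the induced $Y$-fibration $\phi : Z \to T/Z(M)$; when $Z(M)$ is smooth (in particular in characteristic $0$) your argument is correct. The gap is precisely in the final paragraph, which is supposed to handle positive characteristic. The premise that $Z(M)^{\circ}$ is a torus is false: the identity component of a group of multiplicative type need not be smooth. For instance, for $G=\mathrm{SL}_4$ in characteristic $2$ and $P$ the $(2,2)$-parabolic, one has $Z(M)\cong \mathbb{G}_m\times\mu_2$, which is connected (since $\mu_2$ is infinitesimal in characteristic $2$) and not a torus; so the factorization $T \to T/Z(M)^{\circ} \to T/Z(M)$ does not separate off the infinitesimal part, and Hilbert~90 does not apply to the first map. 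Moreover, the local triviality you need genuinely fails in such cases: the $\mu_p$-torsor $\mathbb{G}_m \to \mathbb{G}_m$, $t\mapsto t^p$, has no rational section (and is not \'etale- or smooth-locally trivial, since $t$ is not a $p$-th power even after \'etale extension), so the open $U$ with $\phi^{-1}(U)\cong Y\times U$ need not exist.

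Second, even granting a factorization with a smooth toral part, your ``descending along the finite flat cover'' runs in the wrong direction: you know that $Z=(Y\times T)/Z(M)$ is normal, but to run the bundle argument over $T/Z(M)^{\circ}$ you would need normality of the intermediate quotient $(Y\times T)/Z(M)^{\circ}$, i.e.\ an \emph{ascent} of normality along a finite flat, non-smooth morphism. This is false in general: in characteristic $3$ the $\mu_3$-torsor $\{y^3=1+x^2\}$ over the smooth base $\{\,x : 1+x^2\neq 0\,\}\subset\mathbb{A}^1$ is not normal (near $(0,1)$ it is the cusp $v^3=x^2$), and the cuspidal cubic is finite flat over $\mathbb{A}^1$. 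Normality ascends along smooth morphisms and descends along faithfully flat ones, but it does not ascend along torsors under non-smooth multiplicative groups or along finite flat covers; the non-smoothness of $Z(M)$ is exactly the difficulty your argument must eliminate and does not. (The paper's own proof is admittedly terse here, asserting a smooth-local isomorphism of $Z$ with $Y\times T$ and quoting ascent/descent of normality, but the extra steps you supply contain false claims in exactly the problematic case, so the proposal as written does not close the gap.)
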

\begin{proof}
	We deduce from \eqref{eqn:fibration} that 
$e_P \cdot \wbar{G_\enh} \cdot e_P$ is smooth locally isomorphic to 
$(e_P \cdot \wbar{G_\enh}_{,\mathbf c_P} \cdot e_P) \xt T$. 
It follows from Corollary~\ref{cor:ePnorm} and ascending and descending properties of
normality that $e_P \cdot \wbar{G_\enh}_{,\mathbf c_P} \cdot e_P$ is normal.
\end{proof}

\begin{proof}[Proof of Theorem~\ref{thm:Vin}] 
	By Corollary~\ref{cor:ePnormf} we know that $e_P \cdot \wbar{G_\enh}_{,\mathbf c_P} \cdot
e_P$ is a normal reductive monoid with group of units $M\cdot e_P$.
Recall from \S\ref{sect:recall} that normal reductive monoids are classified by their Renner
cones. 
Since $\wbar M$ is also a normal reductive monoid with group of units $M$, 
to prove the theorem it suffices 
to check that the Renner cones of $\wbar M$ and $e_P \cdot \wbar{G_\enh}_{,\mathbf c_P} \cdot e_P$
are equal.  We may assume that $k$ is algebraically closed.

Fix a Cartan subgroup $T_\sub \subset M \subset G$. Identify $T_\sub$ with the abstract
Cartan $T$ by choosing a Borel subgroup. 
Consider the embedding $T_\sub \into \wbar{G_\enh}$ sending $t \mapsto t \cdot e_P$
and let $\wbar{T_\sub \cdot e_P}$ denote the closure of the image. 
Set $T_\enh := (T_\sub \xt T)/Z(G)$, which is a Cartan subgroup of $G_\enh$, and
let $\wbar{T_\enh}$ denote its closure in $\wbar{G_\enh}$. 
By definition, $e_P$ lies in $\wbar{T_\enh}$, so $T_\sub \into \wbar{G_\enh}$
factors through the homomorphism of monoids
\begin{equation} \label{eqn:Tsubenh}
	T_\sub \into \wbar{T_\enh}  : t \mapsto t \cdot e_P
\end{equation}
Let $\check C \subset \check\Lambda^\bbQ \xt \check\Lambda^\bbQ$ denote the
Renner cone \eqref{eqn:RenVin} of $\wbar{G_\enh}$. 
Recall that the weights in $\check C_\bbZ:=\check C \cap (\check\Lambda \xt \check\Lambda)$ form 
a basis of $k[\wbar{T_\enh}]$.  
Let $(\check\lambda_1,\check\lambda_2) \in \check C_\bbZ$. 
Then $\check\lambda_2 - \check\lambda_1 \in \check\Lambda^\pos_G$, so it may be
considered as a regular function on $\wbar{T_\adj}$. 
Evaluating this function at $\mathbf c_P$ gives a number $(\check\lambda_2 - \check\lambda_1)(\mathbf c_P)$, which is $1$ if $\check\lambda_2 -\check\lambda_1 \in \check\Lambda^\pos_M$ and $0$ otherwise.
By the definition of $e_P$, one sees that the 
homomorphism \eqref{eqn:Tsubenh} corresponds to the map of weights 
\begin{equation}\label{eqn:lattice}
	\check C_\bbZ \to \check\Lambda  : (\check\lambda_1,\check\lambda_2) \mapsto 
(\check\lambda_2 - \check\lambda_1)(\mathbf c_P) \cdot \check\lambda_1. 
\end{equation}
The existence of the map \eqref{eqn:Vin} implies that 
the image of \eqref{eqn:lattice} must land in the Renner cone of $\wbar M$, which 
is generated by the saturated submonoid $W_M \cdot \check\Lambda^+_G$. 
On the other hand, for $\check\lambda \in \check\Lambda^+_G$ and $w \in W_M$, 
one sees that $(w\check\lambda, \check\lambda) \mapsto w\check\lambda$. 
Thus the image of \eqref{eqn:lattice} equals $W_M \cdot \check\Lambda^+_G$.

Therefore the Renner cones of $\wbar M$ and $e_P \cdot \wbar{G_\enh}_{,\mathbf c_P} \cdot e_P$
are equal, which proves the theorem.
\end{proof}

\bibliographystyle{amsplain}
\bibliography{monoid}

\end{document}